\tikzset{vertex/.style={circle,draw,fill,inner sep=0pt,minimum size=1mm}}
\newtheorem{thm}{Theorem}
\newtheorem{cor}[thm]{Corollary}
\newtheorem{lem}[thm]{Lemma}
\newtheorem{prop}[thm]{Proposition}
\newtheorem{quest}[thm]{Question}
\theoremstyle{definition}
\newtheorem{definition}[thm]{Definition}
\newtheorem{exa}[thm]{Example}
\newtheorem{rmk}[thm]{Remark}
\numberwithin{thm}{section}
\newcommand{\Z}{\mathbb{Z}}
\newcommand{\R}{\mathbb{R}}
\newcommand{\adj}{\sim}
\DeclareMathOperator{\id}{id}
\DeclareMathOperator{\NP}{NP}
\begin{document}

\baselineskip= 20.80pt

\title{On digital H-spaces}

\thanks{
The second author was supported by the National Research Foundation of Korea (NRF) grant funded by the Korean government (MSIT) (No. 2020R1A2B6004407)}

\author{Wayne A. Johnson, Dae-Woong Lee, and P. Christopher Staecker}

\date{\today}

\address{
Department of Mathematics, Truman State University, 100 E Normal Ave,
Kirksville, Missouri, 63501
}
\email{wjohnson@truman.edu}

\address{
Department of Mathematics, and Institute of Pure and Applied Mathematics, Jeonbuk National University,
567 Baekje-daero, Deokjin-gu, Jeonju-si, Jeollabuk-do 54896, Republic of Korea
}
\email{dwlee@jbnu.ac.kr}

\address{
Mathematics Department, Fairfield University, 1703 North Benson Rd, Fairfield, CT 06824-5195
}
\email{cstaecker@fairfield.edu}

\subjclass[2020]{Primary 68U03 ; Secondary 55P45, 22A05, 05C30.}
\keywords{pointed digital image, normal product adjacency, (unital) $\NP_i$-digital H-space, $\NP_i$-irreducibility, $\NP_i$-rigidity, $\NP_i$-digital monoid, $\NP_i$-digital topological group, left-unital reduction.}

\begin{abstract}%\todo{Feel free to edit and change on abstract and introduction}
In this article, we investigate properties of digital H-spaces in the graph theoretic model of digital topology. As in prior work, the results obtained often depend fundamentally on the choice between $\NP_1$ and $\NP_2$ product adjacencies. We explore algebraic properties of digital H-spaces preserved under digital homotopy equivalence, and we give a general construction that produces examples of digital H-spaces which are not homotopy-equivalent to digital topological groups in both categories. Further, we show that this construction essentially classifies all $\NP_2$-digital H-spaces. In a short appendix, we resolve a question that was left unresolved in \cite{dtg}, and complete the full classification of digital topological groups.
\end{abstract}

\maketitle

\section{Introduction}
%\todo{Feel free to edit and change on abstract and Introduction}
In the late nineteenth century, Henri Poincar\'e considered the first homotopy group and the notion of homology which are the fundamental methods to classify and distinguish many kinds of topological spaces up to classical homotopy relations. Classical homotopy theory, as one of the main topics in algebraic topology, has been developed in the homotopy category of pointed CW-spaces and homotopy classes of base point preserving continuous functions between CW-spaces. General approaches in classical homotopy theory have been to change algebraic, analytic, or geometric problems into homotopy problems by using various algebraic tools.

An H-group is a homotopy-theoretic generalization of the concept of a group and is an Eckmann-Hilton dual of a co-H-group; see \cite{A1}. An H-space lacks the axioms of homotopy-associativity and homotopy-commutativity from an H-group. 
%One can construct a group structure on the set of pointed homotopy classes of base point preserving continuous functions from an H-group to any pointed topological space, or from any pointed topological space to a co-H-group.

Digital homotopy theory is a relatively new branch of mathematics and computer science that adapts concepts from classical homotopy theory to the setting of digital images or digital spaces. It seeks to define and study invariants that can classify digital images up to digital homotopy equivalence.
%In particular, a digital H-space is a fascinating topics coming from an H-space in classical homotopy theory into the realm of digital images.

The graph-theoretical approach to digital topology was introduced by A. Rosenfeld in the 1970s; see \cite{R}. Work in this area has continued from the 1980s to the present day by many authors. This digital theory has developed in parallel with $A$-homotopy theory \cite{BL} and $\times$-homotopy theory \cite{Do} in the context of abstract graph theory. Those two theories have defined invariants and concepts which in many cases match ideas independently developed for the topology of digital images. All of these theories depend strongly on the details of choices made when constructing products. In the digital topology literature, the two canonical products are denoted $\NP_1$ and $\NP_2$. Generally speaking, $A$-theory works in categories exclusively using $\NP_1$ products, while the $\times$-homotopy theory uses $\NP_2$ products.

Some early work on digital H-spaces appears in \cite{EK} and \cite{EK1}, which make basic definitions and prove some standard results in the $\NP_2$ category (sometimes using pointed homotopies). No examples are given of H-spaces which are not contractible. 
The actions on the set of pointed digital homotopy (associative or commutative) operations to create new digital homotopy (resp., associative or commutative) operations were developed in \cite{D2} in the $\NP_2$ category, and the near-ring structure on the set of all pointed digital homotopy classes of digital Hopf functions between pointed digital Hopf groups was investigated in \cite{D1}.

This paper continues work in \cite{dtg}, which defined and studied \emph{digital topological groups} in both the $\NP_1$ and $\NP_2$ categories. In this paper we define both $\NP_1$ and $\NP_2$-digital H-spaces by relaxing the associativity and invertibility conditions.

This paper is organized as follows. In Section \ref{prem}, we present well-known background material on digital images, digitally continuous functions, and normal product adjacency relations, and we introduce $\NP_i$-digital H-spaces. In Section \ref{prop}, we explore algebraic properties of digital H-spaces that are preserved by homotopy equivalence. In Section \ref{irr}, we explore properties of irreducible digital H-spaces in both categories. In Section \ref{cont}, we show that every connected $\NP_2$-digital H-space is contractible, and thus homotopy equivalent to the trivial digital topological group. In Section \ref{pointed}, we provide an example of a digital H-space that is homotopic to a digital topological group but not pointed homotopic to a group. In Section \ref{examples}, we present a general construction that produces examples of $\NP_1$ and $\NP_2$-digital H-spaces that are not homotopic to digital topological groups and fully classify the $\NP_2$-digital H-spaces. Finally, in a brief appendix, we resolve a question that was left unresolved in \cite{dtg}, and complete the full classification of $\NP_i$-digital topological groups for $i\in\{1,2\}$.

\bigskip

\section{Preliminaries} \label{prem}

A \emph{digital image} $(X, \kappa)$ consists of a finite set $X$ of points in $\Z^n$ with some \emph{adjacency relation} $\kappa$ which is reflexive and symmetric. 
%The standard notation for such a digital image is $(X, \kappa)$. 
Typically the adjacency relation is based on some notion of nearness of points in $\Z^n$. This style of digital topology has its origins in the work of Rosenfeld and others; see \cite{R} for an early work. We will make use of the notation $x \adj_\kappa y$ when $x$ is adjacent to $y$ by the adjacency relation $\kappa$. The particular adjacency relation will usually be clear from context, and in this case we will omit the subscript.

For subsets of $\Z$, the standard adjacency to use is given by $a\sim b$ if and only if $|a-b|\le 1$. When $n\geq 2$ there is no canonical ``standard adjacency'' to use in $\Z^n$ that corresponds naturally to the standard topology of $\R^n$. In the case of $\Z^2$, for example, at least two different adjacency relations are natural: we can view $\Z^2$ as a rectangular lattice connected by the coordinate grid, so that each point is adjacent to 4 neighbors; or we can additionally allow diagonal adjacencies so that each point is adjacent to 8 neighbors. Generalizations of these choices of allowable diagonals gives the $c_u$-adjacencies on $\Z^n$, see \cite{LB3}.

Any digital image $(X,\kappa)$ can naturally be viewed as a finite reflexive graph with vertex set $X$ and an edge connecting $x,y\in X$ if and only if $x\adj y$. Conversely, any finite graph may be considered as a digital image in some $\Z^n$ with $c_n$-adjacency (this is the adjacency which allows all available diagonals):

\begin{thm}[\cite{los23}, Proposition 2.5]\label{embeddingthm}
Let $X$ be a finite simple graph of $k$ vertices. Then, for $n = k-1$, $X$ may be embedded as a digital image $X \subset [-1,1]_\Z^n$ with $c_n$-adjacency.
\end{thm}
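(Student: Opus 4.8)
The plan is to reduce the statement to an explicit combinatorial coordinate assignment and then verify it preserves adjacency exactly. First I would record what $c_n$-adjacency means on the cube $[-1,1]_\Z^n = \{-1,0,1\}^n$: two distinct points $x,y$ are $c_n$-adjacent precisely when $|x_i - y_i| \le 1$ for every coordinate $i$, so that $x$ and $y$ \emph{fail} to be adjacent if and only if some coordinate carries the opposite extreme values, i.e. $\{x_i, y_i\} = \{-1, +1\}$ for some $i$. Thus embedding $X$ as a digital image amounts to building an injection $f \colon V(X) \to \{-1,0,1\}^{n}$ realizing $X$ as the induced $c_n$-subgraph on $f(V(X))$; equivalently, each non-edge of $X$ must be ``witnessed'' by a coordinate on which its two endpoints take opposite extreme values, while no edge admits such a witnessing coordinate.

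Next I would fix an ordering $v_1, \dots, v_k$ of the vertices and define $f$ coordinate by coordinate, devoting coordinate $j \in \{1, \dots, k-1\}$ to the separations created by $v_j$ together with the vertices appearing after it. Concretely, set
\[
f(v_i)_j = \begin{cases} +1 & i = j, \\ -1 & i > j \text{ and } v_i \not\sim v_j, \\ 0 & \text{otherwise.} \end{cases}
\]
The idea is that coordinate $j$ places $v_j$ at $+1$, pushes every later non-neighbour of $v_j$ to $-1$, and leaves everyone else --- including every neighbour of $v_j$ --- at $0$.

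Then I would check adjacency in both directions together with injectivity. For two distinct vertices $v_a, v_b$ with $a<b$, inspect coordinate $a$: it reads $+1$ for $v_a$, while for $v_b$ it reads $-1$ exactly when $v_a \not\sim v_b$ and $0$ when $v_a \sim v_b$. In either case the images differ in coordinate $a$, giving injectivity; moreover when $v_a \not\sim v_b$ this coordinate already separates the pair. For an edge $\{v_a,v_b\}$ the only coordinates on which either endpoint attains $+1$ are $j=a$ and $j=b$; at $j=a$ the neighbour $v_b$ reads $0$, and at $j=b$ the vertex $v_a$ reads $0$ because $a<b$, so no coordinate exhibits opposite extreme values and the images are $c_n$-adjacent. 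Hence $f$ is an isomorphism of $X$ onto the induced subgraph on $f(V(X))$.

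The main obstacle is the dimension count. A naive scheme that assigns a private separating coordinate to each non-edge could use as many as $\binom{k}{2}$ coordinates, far more than the allotted $k-1$. The crux is therefore the economy of the construction: a single coordinate $j$ separates $v_j$ from all of its later non-neighbours simultaneously, and the final vertex $v_k$ needs no coordinate of its own, since any separation involving $v_k$ is already witnessed at the coordinate indexed by the other, smaller-numbered vertex. The point requiring genuine care is that this aggressive sharing of coordinates must not sever any edge; this is guaranteed by the observation used in the edge case, that every neighbour of $v_j$ reads $0$ in coordinate $j$, so a $+1$ can never meet a $-1$ across an edge.
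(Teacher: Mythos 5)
Your construction is correct: the characterization of $c_n$-adjacency on $\{-1,0,1\}^n$ (failure of adjacency iff some coordinate carries opposite extremes), injectivity via coordinate $a$ for any pair $v_a,v_b$ with $a<b$, separation of every non-edge at the coordinate of its smaller-indexed endpoint, and the check that no edge is severed (a $+1$ occurs only at coordinate $j$ for $v_j$ itself, and a neighbour of $v_j$ reads $0$ there) are all sound, and the dimension count $n=k-1$ is handled properly by giving $v_k$ no coordinate of its own. Note that the paper states this result as a quotation from \cite{los23} and supplies no proof of its own, so there is nothing internal to compare against; your explicit coordinate assignment is a complete, self-contained argument of essentially the standard one-coordinate-per-vertex type.
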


(Above, by ``embedded'' we mean that $X$, considered as an abstract graph, is isomorphic as a graph to the digital image $X\subset  [-1,1]_\Z^n$ with $c_n$-adjacency.)

We will, whenever convenient, describe a digital image $(X,\kappa)$ in graph-theoretic terms. Specifically, our definitions above of paths, connectedness, and components correspond exactly to the same concepts in graph theory. All specific examples of digital images given in later sections will be presented simply as abstract graphs, not specifically embedded in some $\Z^n$.

A function $f : (X,\kappa) \rightarrow (Y,\lambda)$ between digital images is called a {\it $(\kappa,\lambda)$-continuous function} \cite {LB1} when for any $x_1,x_2\in X$, if $x_1 \adj x_2$, then  $f(x_1) \sim f(x_2)$.
%Equivalently, $f$ is continuous if
%$f(C)$ is a $\lambda$-connected subset of $Y$ for each $\kappa$-connected subset $C$ of $X$. 
When the adjacencies are clear, we simply say that $f$ is \emph{continuous}.
Any composition of digitally continuous functions is digitally continuous.

Let $a$ and $b$ be nonnegative integers with $a < b$. A {\it digital interval} \cite {LB0} is a finite set of the form
$$
[a,b]_{\mathbb{Z}} = \{ z \in \mathbb Z ~\vert~ a \leq z \leq b \},
$$
with the usual adjacency relation in $\mathbb{Z}$.

A \emph{path} from $x$ to $y$ in a digital image $X$ is a digitally continuous function
$$
f : [0, m]_{\mathbb{Z}} \rightarrow X.
$$
with $f(0)=x$ and $f(m)=y$. Equivalently, a path from $x$ to $y$ can be specified by a sequence of adjacent points:
\[ x = x_0 \sim x_1 \sim \dots \sim x_m = y. \]

A digital image $X$ is said to be \emph{connected} \cite {R} if for every pair $x,y$ of  points of $X$, there exists a path from $x$ to $y$. Given some $x\in X$, the set of all points of $X$ having a path to $x$ is the \emph{connected component} of $x$. A digital image is digitally connected if and only if it consists of a single component.

A $(\kappa,\lambda)$-continuous function of digital images
$$
f : (X,\kappa) \rightarrow (Y,\lambda)
$$
is called a {\it $(\kappa,\lambda)$-isomorphism} if $f$ is a bijection set-theoretically, and its inverse $f^{-1} : (Y,\lambda) \rightarrow (X,\kappa)$ is $(\lambda,\kappa)$-continuous. In this case, $(X,\kappa)$ and $(Y,\lambda)$ are said to be {\it $(\kappa,\lambda)$-isomorphic}; see  \cite {LB0} and \cite{LB2}. We often make use of the following fact: if $f$ is an isomorphism, then $x\adj y$ if and only if $f(x)\adj f(y)$.

Given two digital images $X_1$ and $X_2$, we can consider the Cartesian product $X_1\times X_2$ as a digital image, but there are several natural choices for the adjacency relations to be used in the Cartesian product, analogous to the various $c_u$-adjacencies. The most natural product adjacencies are the \emph{normal product adjacencies}, which were defined by Boxer as follows:

\begin{definition}(\cite{LB4})
Let $\{(X_i,\kappa_i) \mid i = 1,2, \cdots, n\}$ be an indexed family of digital images.
Then for some $u \in \{1,2, \dots,n\}$, the \emph{normal product adjacency} $\NP_u(\kappa_1,\kappa_2,\dots,\kappa_n)$ is the adjacency relation on $\prod_{i=1}^n X_i$ defined by: $(x_1,x_2,\dots, x_n)$ and $(x'_1,x'_2, \dots, x'_n)$ are adjacent if and only if their coordinates are adjacent in at most $u$ positions, and equal in all other positions.
\end{definition}

In this paper, our products generally have the form $X\times X$ for some digital image $(X,\kappa)$. On such a Cartesian product, the two natural adjacencies to choose from are $\NP_1(\kappa,\kappa)$ and $\NP_2(\kappa,\kappa)$. When unambiguous, we will abbreviate $\NP_i(\kappa,\kappa)$ as simply $\NP_i$ for $i\in \{1,2\}$. Clearly, if two points $(x_1, x_2)$ and $(y_1, y_2)$ in $X\times X$ are $\NP_1$-adjacent, then they are also $\NP_2$-adjacent. As we will see, the topological structure of the Cartesian product depends strongly on the choice between $\NP_1$ and $\NP_2$.

We will require two slightly different types of products of maps, where continuity of the product map depends on the choice between $\NP_1$ and $\NP_2$. The following appears as Theorem 4.1 of \cite{LB4}.
\begin{prop}\label{product}
Let $X_i$ and $Y_i$ be digital images for $i\in \{1,2\}$, and let $f_1:X_1 \to Y_1$ and $f_2:X_2 \to Y_2$ be continuous. Then the map $f_1\times f_2: X_1 \times X_2 \to Y_1 \times Y_2$ is both $(\NP_1,\NP_1)$- and $(\NP_2,\NP_2)$-continuous.
\end{prop}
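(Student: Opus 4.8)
The plan is to argue coordinatewise, exploiting the fact that the $\NP_u$-adjacency is defined entirely in terms of how many coordinates are properly adjacent versus equal, together with the basic property that a continuous map preserves both equality and adjacency in each factor. Since the products here have only two factors, I would first unpack the two relevant adjacencies: two points $(x_1,x_2)$ and $(x'_1,x'_2)$ of $X_1\times X_2$ are $\NP_1$-adjacent exactly when they agree in one coordinate and are adjacent (possibly equal) in the other, and they are $\NP_2$-adjacent exactly when they are adjacent (possibly equal) in each coordinate, with no restriction on how many coordinates differ.

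The key observation I would isolate is a coordinatewise one. Fix $i\in\{1,2\}$ and let $x_i,x'_i\in X_i$. If $x_i=x'_i$ then trivially $f_i(x_i)=f_i(x'_i)$; if instead $x_i\adj x'_i$, then continuity of $f_i$ gives $f_i(x_i)\adj f_i(x'_i)$. In particular, writing $D$ for the set of coordinates in which $(x_1,x_2)$ and $(x'_1,x'_2)$ properly differ (adjacent but unequal), the set of coordinates in which $(f_1(x_1),f_2(x_2))$ and $(f_1(x'_1),f_2(x'_2))$ properly differ is contained in $D$: an equal coordinate stays equal, and a properly adjacent coordinate maps to an adjacent pair that may collapse to equality but cannot create a new disagreement.

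With this in hand both claims follow immediately. For $(\NP_2,\NP_2)$-continuity, suppose $(x_1,x_2)\adj_{\NP_2}(x'_1,x'_2)$; then $x_1\adj x'_1$ and $x_2\adj x'_2$, so the coordinatewise observation gives $f_1(x_1)\adj f_1(x'_1)$ and $f_2(x_2)\adj f_2(x'_2)$, which is exactly $\NP_2$-adjacency of the images. For $(\NP_1,\NP_1)$-continuity, suppose $(x_1,x_2)\adj_{\NP_1}(x'_1,x'_2)$, so $|D|\le 1$; since the image points differ in a subset of $D$, they differ in at most one coordinate and are adjacent (or equal) there, hence are $\NP_1$-adjacent.

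I expect no serious obstacle; the proof is essentially bookkeeping. The one point that requires care is the role of reflexivity and the distinction between ``equal'' and ``properly adjacent'' coordinates in the definition of $\NP_u$: one must verify that continuity can only collapse an adjacency to an equality and never manufacture a new disagreement, so that the count of differing coordinates does not increase. This monotonicity of the disagreement set is what keeps the images within the ``at most $u$'' bound for both $u=1$ and $u=2$ simultaneously.
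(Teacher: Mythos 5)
Your argument is correct. Note that the paper does not prove this proposition at all: it is quoted as Theorem 4.1 of \cite{LB4}, so there is no in-text proof to compare against. Your direct verification is the standard one and is sound: the key point, that continuity sends equal coordinates to equal coordinates and adjacent coordinates to adjacent (possibly equal) coordinates, ensures that every coordinate of the image pair remains adjacent while the set of coordinates in which the images properly differ is contained in the set in which the originals properly differ, so the ``at most $u$'' count cannot increase. This handles $\NP_1$ and $\NP_2$ uniformly, and your care with reflexivity (adjacency collapsing to equality is harmless) is exactly the point that needs attention; contrast this with Proposition \ref{np2product}, where the pair map $(f,g)$ on a common domain genuinely fails for $\NP_1$ because the diagonal creates two properly differing coordinates.
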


Our other type of product map is formed as a product of two maps having the same domain. The result this time is weaker, only applying to $\NP_2$-adjacency.
\begin{prop}\label{np2product}
Let $X$ and $Y$ be digital images, and let $f,g:X\to Y$ be continuous. Let $\kappa$ be the adjacency on $X$. Then the map $(f,g):X\to Y\times Y$ is $(\kappa,\NP_2)$-continuous.
\end{prop}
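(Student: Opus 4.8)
The plan is to verify $(\kappa,\NP_2)$-continuity directly from the definition, since continuity is a single adjacency-preservation condition. The key observation is that on a two-fold product $Y\times Y$ the relation $\NP_2$ imposes no joint constraint linking the two coordinates: a pair $(a_1,a_2)$ and $(b_1,b_2)$ is $\NP_2$-adjacent precisely when $a_1\adj b_1$ and $a_2\adj b_2$. Indeed, ``adjacent in at most two positions'' is vacuous when there are only two positions, and since $\kappa$ is reflexive the ``equal in all other positions'' clause is absorbed into the requirement of adjacency in each coordinate separately. So I would first record this elementary characterization of $\NP_2$ on a square product.

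Next I would fix an arbitrary adjacent pair $x_1\adj_\kappa x_2$ in $X$ and evaluate the map, obtaining the two image points $(f,g)(x_1)=(f(x_1),g(x_1))$ and $(f,g)(x_2)=(f(x_2),g(x_2))$ in $Y\times Y$. Applying continuity of $f$ yields $f(x_1)\adj f(x_2)$, and applying continuity of $g$ yields $g(x_1)\adj g(x_2)$. Thus the two image points are adjacent-or-equal in each of the two coordinates, which by the characterization above is exactly $\NP_2$-adjacency. Since $x_1,x_2$ were arbitrary adjacent points, this establishes that $(f,g)$ is $(\kappa,\NP_2)$-continuous.

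There is essentially no obstacle here; the entire content lies in correctly unwinding the $\NP_2$ definition, and the argument is a direct check. The one point worth emphasizing is \emph{why} the statement must be restricted to $\NP_2$ and cannot be strengthened to $\NP_1$: if $f(x_1)\neq f(x_2)$ and also $g(x_1)\neq g(x_2)$, then the image points differ (while being adjacent) in both coordinates at once, which violates the $\NP_1$ requirement of genuine adjacency in at most one position. This contrasts with Proposition \ref{product}, where the two factor maps act on independent coordinates and so never force simultaneous movement in more than one coordinate from a single $\NP_1$-adjacency; that is precisely why the earlier product of maps is continuous for both $\NP_1$ and $\NP_2$, whereas the present diagonal-style product $(f,g)$ is only guaranteed $\NP_2$-continuous.
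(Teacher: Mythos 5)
Your proof is correct and follows essentially the same route as the paper's: fix an adjacent pair in $X$, apply continuity of $f$ and $g$ coordinatewise, and observe that coordinatewise adjacency is exactly $\NP_2$-adjacency on $Y\times Y$. The extra discussion of why the statement fails for $\NP_1$ matches the remark the paper makes immediately after the proposition.
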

\begin{proof}
Take $a,b\in X$ with $a\sim b$, and we must show that $$(f,g)(a) = (f(a),g(a))$$ and $$(f,g)(b) = (f(b),g(b))$$ are $\NP_2$-adjacent. But this is clear because $f(a)\sim f(b)$ and $g(a)\sim g(b)$ by continuity of $f$ and $g$.
\end{proof}

We note that Proposition \ref{np2product} is not true if $\NP_2$ is replaced by $\NP_1$, even in very simple cases. For example if $f=g$ is the identity map $\id_X$, and $a$ and $b$ are distinct adjacent points of $X$, then we will have $(f,g)(a) = (a,a)$ and $(f,g)(b) = (b,b)$, but $(a,a) \not \sim_{\NP_1} (b,b)$ because $a\neq b$. In particular, this means that the traditional diagonal map $\Delta_X:X\to X\times X$ is not $(\kappa,\NP_1)$-continuous, so we will usually avoid it in our definitions. We will, however, often make use of the pair $(\id_X, c)$, where $c$ is a constant. This map is always $(\kappa,\NP_1)$ continuous.

\begin{definition} (\cite {LB0, LB3}) \label{homotopydef}
Let $X$ and $Y$ be digital images, and $i\in \{1,2\}$. An \emph{$\NP_i$-digital homotopy} is an $\NP_i$-continuous map
\[ H:X\times [0,m]_\Z \to Y \]
for some natural number $m$.
If $f, g:X \to Y$ are continuous functions and $H(x,0)=f(x)$ and $H(x,m)=g(x)$, then we say $H$ is an \emph{$\NP_i$-homotopy from $f$ to $g$}, and we say $f$ and $g$ are \emph{$\NP_i$-homotopic}. In this case we write $f\simeq_i g$. %\todo{maybe also define pointed homotopies- not sure if we need them} 
When $m=1$, we say the homotopy is \emph{single step}.
\end{definition}

%We usually write `$\simeq_1$' for pointed\todo{not sure if we want $\simeq$ to always mean ``pointed''. Maybe $\simeq$ and $\simeq^*$ for non-pointed and pointed?} $\NP_1$-homotopy, and `$\simeq_2$' for pointed $\NP_2$-homotopy between pointed digital continuous functions.
%We mostly make use of the notation $(X, e)$ (or more simply $X$) for a pointed digital image $(X, e, k)$.

Two digital images $X$ and $Y$ are said to have the {\it same $\NP_i$-homotopy type}, or to be {\it $\NP_i$-homotopy equivalent}, if there exist continuous functions $f : X\to Y$ and $g : Y\to X$ such that $f \circ g \simeq_i \id_Y$ and $g\circ f \simeq_i \id_X$. In this case, each of the functions $f$ and $g$ is said to be an {\it $\NP_i$-homotopy equivalence}, and $g$ is called an $\NP_i$-homotopy inverse of $f$ (and vice versa).

Now we are ready to define our main object of study, the digital H-space. Recall that, although a general pair of maps $(f,g)$ may not be $\NP_1$-continuous, this pair is always $\NP_i$-continuous for any $i\in \{1,2\}$ when one of the maps is a constant.

\begin{definition}\label{wdc}
For some $i\in \{1,2\}$, an \emph{$\NP_i$-digital H-space} is a triple $(X,e,\mu)$, where $X$ is a digital image with adjacency relation $\kappa$ and $e\in X$, and $\mu:X\times X \to X$ is an ($\NP_i(\kappa, \kappa),\kappa)$-continuous map such that the diagram
$$
\xymatrix@C=20mm @R=15mm{
X \ar@/_/[dr]_-{\id_X} \ar[r]^-{(\id_X, c_e)} & X \times X \ar[d]_-{\mu} &X \ar[l]_-{(c_e, \id_X)} \ar@/^/[dl]^-{\id_X}  \\
&X }
$$
commutes up to digital homotopy; that is,
$$
\mu \circ (\id_X, c_e) \simeq_i \id_X
$$ 
and 
$$
\mu\circ (c_e , \id_X) \simeq_i \id_X,
$$
where $c_e : X \to X$ is the constant function with constant value $e$.
In this case, the function $\mu : X \times X \to X$ is called an \emph{$\NP_i$-digital multiplication} on $X$.
If $$\mu \circ (\id_X, c_e) = \id_X = \mu\circ (c_e , \id_X),$$ we say $(X,e,\mu)$ is \emph{unital}.
\end{definition}

To abbreviate our notation, we say that an ($\NP_i(\kappa, \kappa),\kappa)$-continuous function $\mu:X\times X \to X$ is simply $\NP_i$-continuous.

We note that the homotopies we require in the definition above are not assumed to preserve the basepoint $e$. We include a more detailed discussion of pointed vs non-pointed homotopies in Section \ref{pointed}.

\begin{definition}
Let $(X, e, \mu)$ be an $\NP_i$-digital H-space with an $\NP_i$-digital multiplication $\mu : X \times X \rightarrow X$ for $i =1,2$. A continuous function $\alpha : X \rightarrow X$ is said to be a \emph{left homotopy-inverse}
if $\mu\circ (\alpha,\id_X)$ is $(\kappa, \kappa)$-continuous, and the diagram
$$
\xymatrix@C=25mm @R=15mm{
X \ar@/_/[dr]_-{c_e} \ar[r]^-{(\alpha, \id_X)} & X \times X \ar[d]^-{\mu}   \\
&X }
$$
commutes up to $\NP_i$-digital homotopy; that is,
\begin{equation}\label{Lee17-1}
\mu \circ (\alpha, \id_X) \simeq_i c_e.
\end{equation}
If $$\mu \circ (\alpha, \id_X) = c_e,$$ we say $\alpha$ is a \emph{left inverse}. 
Similarly, a continuous function $\beta : X \rightarrow X$ is said to be a \emph{right homotopy-inverse}
if $\mu\circ (\id_X,\beta)$ is $\NP_i$-continuous, and $$\mu \circ (\id_X,\beta) \simeq_i c_e.$$ If $$\mu \circ (\id_X,\beta) = c_e,$$ we say $\beta$ is a \emph{right inverse}.
\end{definition}

\begin{rmk}\label{htpinv}
We note that homotopy-invertibility is not a very natural property in the $\NP_1$ category, because a function $(\alpha, \id_X)$ may not in general be $\NP_1$ continuous. 

In fact, if $\alpha:X\to X$ is a left inverse, then it is automatically an isomorphism, and in this case $(\alpha,\id_X)$ will be discontinuous: for distince $x,y\in X$ with $x\sim y$, we will have $$(\alpha,\id_X)(x) = (\alpha(x),x) \not \sim (\alpha(y),y) = (\alpha,\id_X)(y).$$ Thus in the $\NP_1$ category, even an exact inverse function will not be a homotopy-inverse, and so we do not expect homotopy-invertibility to be a natural property for the $\NP_1$ category.
\end{rmk}

\begin{definition}
Let $(X, e, \mu)$ be an $\NP_i$-digital H-space with an $\NP_i$-digital multiplication $\mu : X \times X \rightarrow X$. This H-space is said to be  \emph{homotopy-associative} if the following diagram
\begin{equation}\label{Lee18}
\xymatrix@C=25mm @R=15mm{
X \times X \times X \ar[d]_-{\id_X \times \mu} \ar[r]^-{\mu \times \id_X} & X \times X \ar[d]^-{\mu}  \\
X \times X \ar[r]^-{\mu} &X }
\end{equation}
commutes up to $\NP_i$-digital homotopy; that is,
$$
\mu \circ (\id_X \times \mu) \simeq_i \mu \circ (\mu \times \id_X).
$$
If $$\mu \circ (\id_X \times \mu) = \mu \circ (\mu \times \id_X),$$ we say $(X,e,\mu)$ is \emph{associative}.
\end{definition}

Given an $\NP_i$-digital H-space $(X,e,\mu)$, for each $x\in X$, let $\mu_x:X\to X$ be given by $$\mu_x(y) = \mu(x,y).$$ Similarly define $\nu_x:X\to X$ by $$\nu_x(y) = \mu(y,x).$$ 
%Below we show that these right and left multiplication functions in general are homotopic when the operands are in the same component.

\begin{lem}\label{muhomotopic}
Let $(X,e,\mu)$ be an $\NP_i$-digital H-space. If $x,y\in X$ are in the same component of $X$, then $\mu_x \simeq_i \mu_y$ and $\nu_x \simeq_i \nu_y$.
\end{lem}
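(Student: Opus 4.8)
The plan is to construct an explicit homotopy out of a path joining $x$ to $y$, exploiting the fact that the interpolating map is assembled from a \emph{product} map rather than a diagonal map, so that Proposition \ref{product} supplies continuity in both the $\NP_1$ and $\NP_2$ categories at once.

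Since $x$ and $y$ lie in the same component, I would first fix a path $\gamma:[0,m]_\Z \to X$ with $\gamma(0)=x$ and $\gamma(m)=y$. To compare $\mu_x$ and $\mu_y$, I would define $H:X\times[0,m]_\Z \to X$ by $H(z,t)=\mu(\gamma(t),z)$. By construction $H(z,0)=\mu(x,z)=\mu_x(z)$ and $H(z,m)=\mu(y,z)=\mu_y(z)$, so $H$ already interpolates between the two maps, and it remains only to check that $H$ is an $\NP_i$-homotopy, i.e. that it is $\NP_i$-continuous.

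The key step is to factor $H$ as $\mu\circ\Phi$, where $\Phi:X\times[0,m]_\Z \to X\times X$ sends $(z,t)\mapsto(\gamma(t),z)$. Since $\mu$ is $\NP_i$-continuous by hypothesis, it suffices to show $\Phi$ is $(\NP_i,\NP_i)$-continuous. The important observation is that, up to the coordinate-swap isomorphism $X\times[0,m]_\Z \cong [0,m]_\Z\times X$ (which preserves $\NP_i$-adjacency), $\Phi$ is exactly the product map $\gamma\times\id_X$. Because $\gamma$ and $\id_X$ are each continuous, Proposition \ref{product} guarantees that $\gamma\times\id_X$, and hence $\Phi$, is both $(\NP_1,\NP_1)$- and $(\NP_2,\NP_2)$-continuous. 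This is precisely where it matters that one slot of $H$ depends only on $t$ and the other only on $z$: if both slots depended on the same variable (a diagonal), Proposition \ref{product} would fail in the $\NP_1$ case, as noted after Proposition \ref{np2product}.

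Finally, the statement for $\nu_x$ and $\nu_y$ follows by the symmetric construction $G(z,t)=\mu(z,\gamma(t))$, which factors through $(z,t)\mapsto(z,\gamma(t))=(\id_X\times\gamma)(z,t)$; the same appeal to Proposition \ref{product} shows $G$ is $\NP_i$-continuous, and its endpoints are $\nu_x$ and $\nu_y$. I do not anticipate a serious obstacle here: the only genuine point requiring care is the continuity bookkeeping for the intermediate map $\Phi$, and the product structure dispatches that cleanly in both categories.
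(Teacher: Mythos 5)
Your proof is correct, but it takes a genuinely different route from the paper's. The paper first reduces to the case of a single adjacency $x\sim y$ (by induction along a path and transitivity of $\simeq_i$), and then treats $i=1$ and $i=2$ separately, invoking the characterizations of single-step $\NP_1$- and $\NP_2$-homotopies from the reference \cite{stae21} and verifying the required pointwise adjacencies directly from the $\NP_i$-continuity of $\mu$. You instead assemble one global homotopy $H=\mu\circ\Phi$ along the entire path, where $\Phi$ is (up to the adjacency-preserving coordinate swap) the product map $\gamma\times\id_X$, and you get $(\NP_i,\NP_i)$-continuity of $\Phi$ in both categories at once from Proposition \ref{product}; composing with the $(\NP_i,\kappa)$-continuous $\mu$ then gives the homotopy. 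Your key observation --- that the interpolating map is a product of maps with \emph{different} domains rather than a diagonal, so Proposition \ref{product} applies even for $\NP_1$ --- is exactly the right point to isolate, and it makes the argument uniform in $i$ and self-contained within the paper's stated preliminaries, with no appeal to external single-step homotopy results. What the paper's approach buys in exchange is the slightly finer conclusion that adjacent parameters $x\sim y$ yield a \emph{single-step} homotopy $\mu_x\simeq_i\mu_y$, which is in the spirit of how rigidity arguments are used later, but that refinement is not needed for the lemma as stated.
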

\begin{proof}
We will give the proof for the statement about $\mu_x$. The proof for $\nu_x$ is similar.

Since $x$ and $y$ are in the same component, there is a path along adjacencies connecting $x$ and $y$. By induction and transitivity of the homotopy relation, it suffices to show that if $x\sim y$, then $\mu_x\simeq_i \mu_y$. In fact we will show that $\mu_x$ and $\mu_y$ are homotopic in a single step.

We argue the cases $i=1$ and $i=2$ separately. For $i=1$, by Proposition 1.4 of \cite{stae21}, we must show that   $\mu_x(a)\sim \mu_y(a)$ for every $a\in X$. Since $x\sim y$ we have $$(x,a) \sim_1 (y,a),$$ and since $\mu$ is $\NP_1$-continuous we have $$\mu(x,a) \sim \mu(y,a)$$ which means $$\mu_x(a)\sim \mu_y(a)$$ as desired.

For $i=2$, by Proposition 2.4 of \cite{stae21}, we must show that if $a \sim b$, then $$\mu_x(a) \sim \mu_y(b).$$ Since both $a\sim b$ and $x\sim y$, we have $$(x,a)\sim_2 (y,b).$$ Then since $\mu$ is $\NP_2$-continuous we have $$\mu(x,a) \sim \mu(y,b)$$ which means $$\mu_x(a)\sim \mu(y,b)$$ as desired.
\end{proof}

Let $X_e\subseteq X$ be the component of $e$. Since $$\mu_e \simeq_i \mu\circ (\id_X,c_e) \simeq_i \id_X,$$ the above gives:
\begin{thm}\label{homotopictoid}
Let $(X,e,\mu)$ be an $\NP_i$-digital H-space. Then if $x\in X_e$, we have $$\mu_x \simeq_i \id_X \simeq_i \nu_x.$$
\end{thm}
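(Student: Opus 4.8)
The plan is to combine Lemma \ref{muhomotopic} with the unit axioms of Definition \ref{wdc}, so that the statement falls out by transitivity of $\simeq_i$. Conceptually, the point is that being homotopic to $\id_X$ is a property that $\mu_e$ and $\nu_e$ enjoy at the basepoint because of the unit axioms, and Lemma \ref{muhomotopic} is exactly the tool that transports this property along adjacency paths to every point of the component $X_e$.

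First I would unwind $\mu_e$ and $\nu_e$ into the composites appearing in Definition \ref{wdc}. Evaluating at an arbitrary $y\in X$ gives $\mu_e(y) = \mu(e,y) = \mu\circ(c_e,\id_X)(y)$ and $\nu_e(y) = \mu(y,e) = \mu\circ(\id_X,c_e)(y)$, so that $\mu_e = \mu\circ(c_e,\id_X)$ and $\nu_e = \mu\circ(\id_X,c_e)$ as functions. The two unit axioms of the H-space then read precisely $\mu_e \simeq_i \id_X$ and $\nu_e \simeq_i \id_X$.

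Next, since $x\in X_e$ means that $x$ lies in the same component as $e$, Lemma \ref{muhomotopic} applies immediately and yields $\mu_x \simeq_i \mu_e$ and $\nu_x \simeq_i \nu_e$. Concatenating these with the homotopies from the previous step, transitivity of $\simeq_i$ gives $\mu_x \simeq_i \mu_e \simeq_i \id_X$ and $\nu_x \simeq_i \nu_e \simeq_i \id_X$, which is the claim. This covers both $i=1$ and $i=2$ uniformly, since Lemma \ref{muhomotopic} and the unit axioms are each stated for both values of $i$.

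I do not expect any genuine obstacle here: the theorem is an immediate corollary of the already-established Lemma \ref{muhomotopic}, which is exactly why the text introduces it with ``the above gives.'' The only point requiring care is the bookkeeping of matching each unit axiom to the correct translate, namely that the left-unit composite $\mu\circ(c_e,\id_X)$ recovers $\mu_e$ while the right-unit composite $\mu\circ(\id_X,c_e)$ recovers $\nu_e$; after that, one simply invokes transitivity of $\simeq_i$, which is legitimate since the concatenation of two digital homotopies is again a digital homotopy.
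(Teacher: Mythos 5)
Your proof is correct and follows essentially the same route as the paper, which derives the theorem in one line from Lemma \ref{muhomotopic} together with the unit homotopies $\mu_e \simeq_i \id_X$ and $\nu_e \simeq_i \id_X$. Your bookkeeping is in fact slightly more careful than the paper's, which writes $\mu_e \simeq_i \mu\circ(\id_X,c_e)$ where the exact identity is $\mu_e = \mu\circ(c_e,\id_X)$; either way both composites are homotopic to $\id_X$ by Definition \ref{wdc}, so the argument goes through.
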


%Next we show that if $X$ is unital, then $\mu_x$ can only equal $\id_X$ when $x=e$. 
%
%\begin{lem}\label{unitalmux}
%Let $(X,e,\mu)$ be a unital 
%$\NP_i$-digital H-space, and let $x\in X$. Then $\mu_x = \id_X$ if and only if $x=e$, and $\nu_x=\id_X$ if and only if $x=e$.
%\end{lem}
%\begin{proof}
%We will prove the statement about $\mu_x$, the proof for $\nu_x$ is similar.
%
%We have $\mu_e = \id_X$, so one direction of the statement is clear. For the other direction, assume that $\mu_x = \id_X$. Then we have $\mu_x(e) = \id_X(e) = e$ by our assumption, but also we have $\mu_x(e) = \mu(x,e) = x$ since $e$ is an identity element. Thus $x=e$ as desired.
%\end{proof}
%
%\begin{cor}\label{nonrigid}
%Let $(X,e,\mu)$ be a unital $\NP_i$-digital H-space such that $X$ has more than one point. Then $X$ is not $\NP_i$-rigid.
%\end{cor} 
%\begin{proof}
%Take $x\in X$ with $x\neq e$. By Theorem \ref{homotopictoid} we have $\mu_x \simeq_i \id_X$, but by Lemma \ref{unitalmux}, $\mu_x \neq \id_X$. Thus $\id_X$ is $\NP_i$-homotopic to a map other than itself, and so $X$ is not $\NP_i$-rigid for $i = 1,2$.
%\end{proof}

\bigskip

\section{Homotopy equivalence of H-spaces}  \label{prop}
In this section, we examine which algebraic properties will be preserved by homotopy equivalence of digital H-spaces. We will see that homotopy-associativity is preserved, homotopy-invertibility is preserved in the $\NP_2$ category, and the existence of a unit may not be preserved.

We will use a special form of homotopy equivalence which preserves the H-space structure:
\begin{definition}\label{hspaceequiv}
Let $(X,e_X,\mu_X)$ and $(Y,e_Y,\mu_Y)$ be two $\NP_i$-digital H-spaces. We say they are \emph{H-equivalent} if there are continuous pointed maps $f:(X,e_X) \to (Y,e_Y)$ and $g:(Y,e_Y) \to (X,e_X)$ such that
\begin{itemize}
    \item $f \circ g \simeq_i \id_Y$;
    \item $g\circ f \simeq_i \id_X$;
    \item $f \circ \mu_X \simeq_i \mu_Y \circ (f\times f)$ and
    \item $g \circ \mu_Y \simeq_i \mu_X \circ (g\times g)$.
\end{itemize}
\end{definition}

It is easy to verify that $H$-equivalence is an equivalence relation.%\todo{need to prove this? it's easy}

If we have a homotopy equivalence of digital images $X$ and $Y$, with an H-space structure on $X$, this can be used to construct an H-space structure on $Y$ as in the following Theorem. A version of this result also appears in Theorem 3.9 of \cite{EK1}.
\begin{thm}\label{inducedmult}
Let $(X,e_X,\mu_X)$ be an $\NP_i$-digital H-space, and let $(Y,e_Y)$ be a pointed digital image such that $X$ and $Y$ are $\NP_i$-homotopy equivalent by pointed maps $f:(X,e_X) \to (Y,e_Y)$ and $g:(Y,e_Y) \to (X,e_X)$. Let $\mu_Y:Y\times Y \to Y$ be defined by
\[\mu_Y = f \circ \mu_X \circ (g\times g). \]
Then $(Y,e_Y,\mu_Y)$ is an $\NP_i$-digital H-space, and $(X,e_X,\mu_X)$ and $(Y,e_Y,\mu_Y)$ are H-equivalent.
\end{thm}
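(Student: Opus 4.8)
The plan is to verify the two assertions separately: first that $(Y,e_Y,\mu_Y)$ satisfies the axioms of Definition \ref{wdc}, and then that $f,g$ witness an H-equivalence in the sense of Definition \ref{hspaceequiv}. Throughout I will lean on two elementary facts about $\NP_i$-homotopy that follow from Proposition \ref{product}: homotopy is preserved under pre-composition (if $p\simeq_i q$ and $r$ is continuous, then $p\circ r\simeq_i q\circ r$, via $H\circ(r\times\id)$) and under post-composition (if $p\simeq_i q$ and $s$ is continuous, then $s\circ p\simeq_i s\circ q$, via $s\circ H$). The continuity of $\mu_Y=f\circ\mu_X\circ(g\times g)$ is immediate: $g\times g$ is $(\NP_i,\NP_i)$-continuous by Proposition \ref{product}, $\mu_X$ is $\NP_i$-continuous by hypothesis, and $f$ is continuous, so the composite is $\NP_i$-continuous.

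For the unit laws I would rewrite the left-unit composite. Since $g$ is pointed, $g(e_Y)=e_X$, so for $y\in Y$ one has $\mu_Y\circ(\id_Y,c_{e_Y})(y)=f(\mu_X(g(y),e_X))$, which gives the identity of maps
\[ \mu_Y\circ(\id_Y,c_{e_Y}) = f\circ\bigl(\mu_X\circ(\id_X,c_{e_X})\bigr)\circ g. \]
The H-space axiom for $X$ gives $\mu_X\circ(\id_X,c_{e_X})\simeq_i\id_X$; pre-composing with $g$ and post-composing with $f$ then yields $\mu_Y\circ(\id_Y,c_{e_Y})\simeq_i f\circ g\simeq_i\id_Y$, the last step being the homotopy-equivalence hypothesis. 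The right-unit law is identical, using $\mu_X\circ(c_{e_X},\id_X)\simeq_i\id_X$. This establishes that $(Y,e_Y,\mu_Y)$ is an $\NP_i$-digital H-space.

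For the H-equivalence, the conditions $f\circ g\simeq_i\id_Y$ and $g\circ f\simeq_i\id_X$ are given, and $f,g$ are pointed by hypothesis. The compatibility $g\circ\mu_Y\simeq_i\mu_X\circ(g\times g)$ is easy: since $g\circ\mu_Y=(g\circ f)\circ\bigl(\mu_X\circ(g\times g)\bigr)$, pre-composing the homotopy $g\circ f\simeq_i\id_X$ with $\mu_X\circ(g\times g)$ gives the claim. The remaining condition $f\circ\mu_X\simeq_i\mu_Y\circ(f\times f)$ is where the real work lies. Using $(g\times g)\circ(f\times f)=(g\circ f)\times(g\circ f)$ we have $\mu_Y\circ(f\times f)=f\circ\mu_X\circ\bigl((g\circ f)\times(g\circ f)\bigr)$, so it suffices to prove the product homotopy
\[ (g\circ f)\times(g\circ f)\simeq_i \id_{X\times X} \]
and then pre-compose with $f\circ\mu_X$.

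This product homotopy is the main obstacle, and it is exactly where the distinction between $\NP_1$ and $\NP_2$ bites. Let $K\colon X\times[0,m]_\Z\to X$ be a homotopy from $g\circ f$ to $\id_X$. For $i=2$ the naive simultaneous homotopy $J((a,b),t)=(K(a,t),K(b,t))$ is $\NP_2$-continuous and does the job. For $i=1$ this $J$ fails: when only the time coordinate advances, both factors move at once, which is forbidden by $\NP_1$-adjacency. I would instead deform one factor at a time, first via $L((a,b),t)=(K(a,t),(g\circ f)(b))$ from $(g\circ f)\times(g\circ f)$ to $\id_X\times(g\circ f)$, then via $M((a,b),t)=(a,K(b,t))$ to $\id_{X\times X}$, and concatenate. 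Because in each stage the stationary factor does not depend on $t$, advancing time changes only one coordinate, so $L$ and $M$ are genuinely $\NP_1$-continuous; checking this against the domain adjacency case-by-case is the one slightly delicate computation. Since this staged construction is also valid for $i=2$, it may be cleanest to present it uniformly for both categories.
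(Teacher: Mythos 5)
Your proposal is correct and follows essentially the same route as the paper: the same algebraic rewriting $(g\times g)\circ(\id_Y,c_{e_Y})=(\id_X,c_{e_X})\circ g$ for the unit laws, and the same reduction of the compatibility conditions to $(g\circ f)\times(g\circ f)\simeq_i\id_{X\times X}$. The one difference is that the paper simply asserts the homotopy $f\circ\mu_X\circ((g\circ f)\times(g\circ f))\simeq_i f\circ\mu_X$ without comment, whereas you correctly identify the $\NP_1$ subtlety there and supply the valid one-factor-at-a-time homotopy; this is a genuine and worthwhile elaboration of a step the paper leaves implicit, not a different argument.
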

\begin{proof}
To show that $(Y,e_Y,\mu_Y)$ is an $\NP_i$-digital H-space, we have:
\[ \begin{split}
\mu_Y \circ (\id_Y, c_{e_Y}) &= f \circ \mu_X \circ (g\times g) \circ (\id_Y , c_{e_Y})  \\
&= f \circ \mu_X \circ (\id_X , c_{e_X}) \circ g, \\
&\simeq_i f \circ \id_X \circ g \\
&\simeq_i \id_Y. 
\end{split}
\]
Similarly, we can show that $$\mu_Y \circ (c_{e_Y}, \id_Y) \simeq_i \id_Y,$$ and so $(Y,e_Y,\mu_Y)$ is an $\NP_i$ digital H-space.

To show that they are H-equivalent, we must show that $$f\circ \mu_X \simeq_i \mu_Y \circ (f\times f)$$ and $$g\circ \mu_Y \simeq_i \mu_X \circ (g\times g).$$ We have:
\[ 
\begin{split}
\mu_Y \circ (f\times f) &= f \circ \mu_X \circ (g\times g) \circ (f\times f) \\
&= f \circ \mu_X \circ ((g\circ f) \times (g\circ f)) \\
&\simeq_i f\circ \mu_X 
\end{split}
\]
and 
\[
\begin{split}
g\circ \mu_Y &= g \circ f \circ \mu_X \circ (g\times g) \\
&\simeq_i \mu_X \circ (g\times g) 
\end{split}
\]
as desired.
\end{proof}

We now demonstrate that an H-equivalence preserves homotopy-associativity. 

\begin{thm}\label{htpeqassociative}
Let $(X,e_X,\mu_X)$ and $(Y,e_Y,\mu_Y)$ be $\NP_i$-digital H-spaces which are H-equivalent. If $(X,e_X,\mu_X)$ is homotopy-associative, then $(Y,e_Y,\mu_Y)$ is homotopy-associative.
\end{thm}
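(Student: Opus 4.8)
The plan is to transport the homotopy-associativity of $\mu_X$ across the H-equivalence onto $\mu_Y$. The key preliminary step is to realize the given $\mu_Y$, up to homotopy, as the induced multiplication of Theorem~\ref{inducedmult}; that is, to prove
\[ \mu_Y \simeq_i f\circ\mu_X\circ(g\times g). \]
I would obtain this from the H-equivalence condition $f\circ\mu_X \simeq_i \mu_Y\circ(f\times f)$ by post-composing with $g\times g$, which gives $f\circ\mu_X\circ(g\times g)\simeq_i \mu_Y\circ\bigl((f\circ g)\times(f\circ g)\bigr)$ after using the functoriality identity $(f\times f)\circ(g\times g)=(f\circ g)\times(f\circ g)$. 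Then, since $f\circ g\simeq_i\id_Y$ and both taking products and composing preserve $\simeq_i$ (exactly the facts already employed in the proof of Theorem~\ref{inducedmult}), the right-hand side is homotopic to $\mu_Y\circ\id_{Y\times Y}=\mu_Y$.

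Next I would substitute this identity into the left association $\mu_Y\circ(\id_Y\times\mu_Y)$, replacing both occurrences of $\mu_Y$. Writing $h=f\circ\mu_X\circ(g\times g)$ and expanding $h\circ(\id_Y\times h)$ with the functoriality rule $(p\times q)\circ(r\times s)=(p\circ r)\times(q\circ s)$, the inner composite $g\circ f$ appears and can be cancelled up to homotopy via $g\circ f\simeq_i\id_X$. Regrouping then factors $f$ out on the left and the triple product $g\times g\times g$ out on the right, yielding
\[ \mu_Y\circ(\id_Y\times\mu_Y) \simeq_i f\circ\mu_X\circ(\id_X\times\mu_X)\circ(g\times g\times g). \]
The identical computation applied to the right association gives
\[ \mu_Y\circ(\mu_Y\times\id_Y) \simeq_i f\circ\mu_X\circ(\mu_X\times\id_X)\circ(g\times g\times g). \]

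To conclude, I would invoke the hypothesis that $X$ is homotopy-associative, $\mu_X\circ(\id_X\times\mu_X)\simeq_i\mu_X\circ(\mu_X\times\id_X)$, and post-compose with $f$ and pre-compose with $g\times g\times g$. As both operations preserve $\simeq_i$, the two right-hand sides displayed above become homotopic, and transitivity of $\simeq_i$ delivers $\mu_Y\circ(\id_Y\times\mu_Y)\simeq_i\mu_Y\circ(\mu_Y\times\id_Y)$, the desired homotopy-associativity of $Y$.

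The routine bookkeeping of composing and reassociating maps is not the real difficulty; the point requiring care is that every manipulation remains valid in the $\NP_i$ category. Concretely, I must be sure that products of $\NP_i$-homotopic maps are again $\NP_i$-homotopic and that the triple-product maps $\id_X\times\mu_X$, $\mu_X\times\id_X$, and $g\times g\times g$ are genuinely $\NP_i$-continuous on $X\times X\times X$; these follow from Proposition~\ref{product}, but for $i=2$ in particular one should confirm that the iterated normal product adjacency on $X\times X\times X$ agrees with the one implicit in diagram~\eqref{Lee18}, so that the homotopy-associativity hypothesis applies verbatim. This compatibility of the nested normal product adjacency is the only genuinely delicate point in the argument.
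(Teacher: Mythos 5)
Your proposal is correct and follows essentially the same route as the paper's proof: both transport homotopy-associativity through the H-equivalence by rewriting $\mu_Y$ (up to homotopy) as $f\circ\mu_X\circ(g\times g)$, cancelling $g\circ f\simeq_i\id_X$ and $f\circ g\simeq_i\id_Y$, and applying the associativity of $\mu_X$ in the middle; the only difference is presentational, as the paper writes one long chain from $\mu_Y\circ(\mu_Y\times\id_Y)$ to $\mu_Y\circ(\id_Y\times\mu_Y)$ while you reduce both sides symmetrically to a common middle term.
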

\begin{proof}
%The first part appears in \cite{ek16}. We will include it here for the sake of completeness. 
%Let $\rho(u,v) = f(\mu(g(u),g(v)))$, so $\rho = f\circ \mu \circ (g\times g)$, and our assumptions mean that $f(e)=p$ and $g(p)=e$. 
%First we note that $\rho \circ (\id_Y\times c_p) \circ \Delta_Y$ is continuous because $\rho$ and $(\id_Y \times c_p)\circ \Delta_Y$ are continuous. 

Assume that $(X,e_X,\mu_X)$ is homotopy-associative, and we will show that $(Y,e_Y,\mu_Y)$ is homotopy-associative. 

Let $f:(X,e_X) \to (Y,e_Y)$ and $g:(Y,e_Y) \to (X,e_X)$ be the maps realizing the H-equivalence, so that $$\mu_Y\circ (f\times f) \simeq_i f\circ \mu_X$$ and $$\mu_X \circ (g\times g) \simeq_i g \circ \mu_Y.$$ We have:
\[ \begin{split}
\mu_Y\circ (\mu_Y\times \id_Y) &\simeq_i f \circ g \circ \mu_Y \circ ((f\circ g \circ \mu_Y) \times \id_Y) \\
&= f \circ \mu_X \circ (g\times g) \circ ((f \circ \mu_X \circ (g\times g)) \times \id_Y) \\
&= f \circ \mu_X \circ  ((g \circ f \circ \mu_X \circ (g\times g))\times g) \\
&\simeq_i f \circ \mu_X \circ ((\mu_X \circ (g\times g))\times g) \\
&= f \circ \mu_X \circ (\mu_X \circ \id_X) \circ (g\times g\times g) \\
&\simeq_i f \circ \mu_X \circ (\id_X \times \mu_X) \circ (g\times g \times g) \\
&= f \circ \mu_X \circ (g \times (\mu_X \circ (g\times g))) \\
&\simeq_i f \circ \mu_X \circ (g\times (g\circ f \circ \mu_X \circ (g\times g))) \\
&\simeq_i \mu_Y \circ (f\times f) \circ (g \times (g\circ \mu_Y \circ (f\times f) \circ (g\times g))) \\
&= \mu_Y \circ ((f\circ g) \times (f\circ g \circ \mu_Y \circ ((f\circ g)\times (f\circ g))) \\
&= \mu_Y \circ (\id_Y \times \mu_Y)
\end{split} \]
as desired.
\end{proof}

%In an $\NP_i$ digital H-space $(X,e,\mu)$, a \emph{left homotopy-inverse} is a continuous map $\alpha: X\to X$ such that $\mu\circ (\alpha \times \id_X) \circ \Delta_X$ is continuous and 
%\begin{equation}\label{htpinversedef}
%\mu\circ (\alpha \times \id_X) \circ \Delta_X \simeq_i c_e.
%\end{equation}
%If $\mu\circ (\alpha \times \id_X) \circ \Delta_X = c_e$, we say $\alpha$ is a \emph{left inverse}.
%Similarly a \emph{right homotopy-inverse} is a continuous map $\beta:X\to X$ such that $\mu\circ (\id_X \times \beta) \circ \Delta_X$ is continuous and homotopic to $c_e$. If it equals $c_e$, then $\beta$ is a \emph{right inverse}.

The theorem above showed that if $X$ and $Y$ are homotopy equivalent H-spaces and $X$ is homotopy-associative, then $Y$ is homotopy-associative. The following theorem is a similar result for homotopy-inverses, but we are only able to prove it in the $\NP_2$ category. A similar result in the $\NP_1$ category may not be very meaningful in light of Remark \ref{htpinv}. 
\begin{lem}\label{htpeqinverses}
Let $(X,e_X,\mu_X)$ and $(Y,e_Y,\mu_Y)$ be H-equivalent $\NP_2$-digital H-spaces. If $(X,e,\mu)$ has a left homotopy-inverse, then $(Y,e_Y,\mu_Y)$ has a left homotopy-inverse. Similarly, if $(X,e,\mu)$ has a right homotopy-inverse, then $(Y,e_Y,\mu_Y)$ has a right homotopy-inverse. If the left and right homotopy-inverses of $X$ are homotopic, then the left and right homotopy-inverses of $Y$ are homotopic.
\end{lem}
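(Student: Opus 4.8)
The plan is to transport the homotopy-inverses across the equivalence in the obvious way: if $\alpha$ is a left homotopy-inverse for $(X,e_X,\mu_X)$, I would set $\alpha' = f\circ \alpha\circ g$ and claim that $\alpha'$ is a left homotopy-inverse for $(Y,e_Y,\mu_Y)$; symmetrically $\beta' = f\circ\beta\circ g$ will serve as a right homotopy-inverse. Two things must be checked for $\alpha'$: that $\mu_Y\circ(\alpha',\id_Y)$ is $(\kappa_Y,\kappa_Y)$-continuous, and that $\mu_Y\circ(\alpha',\id_Y)\simeq_2 c_{e_Y}$. The continuity is where the restriction to the $\NP_2$ category is essential: since $\alpha'$ and $\id_Y$ are both continuous, Proposition \ref{np2product} gives that $(\alpha',\id_Y)$ is $(\kappa_Y,\NP_2)$-continuous, and composing with the $\NP_2$-continuous map $\mu_Y$ yields a continuous map. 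In the $\NP_1$ category this step fails, exactly as in Remark \ref{htpinv}, which is why the lemma is stated only for $\NP_2$.

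For the homotopy, I would run the chain
\[ \mu_Y\circ(\alpha',\id_Y) \simeq_2 \mu_Y\circ(\alpha', f\circ g) = \mu_Y\circ(f\times f)\circ(\alpha,\id_X)\circ g \simeq_2 f\circ\mu_X\circ(\alpha,\id_X)\circ g \simeq_2 c_{e_Y}. \]
The first homotopy uses $\id_Y\simeq_2 f\circ g$ together with the fact that in the $\NP_2$ category the assignment $q\mapsto \mu_Y\circ(\alpha',q)$ carries $\NP_2$-homotopic maps to $\NP_2$-homotopic maps; this is checked by feeding a homotopy $K$ between $q$ and $q'$ into the pair $(\alpha'\circ\pi_1, K)$, which is $\NP_2$-continuous again by Proposition \ref{np2product}, and postcomposing with $\mu_Y$. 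The middle equality is the pointwise identity $(\alpha',f\circ g) = (f\times f)\circ(\alpha,\id_X)\circ g$, recalling that $\alpha' = f\circ\alpha\circ g$. The third homotopy substitutes the H-equivalence relation $\mu_Y\circ(f\times f)\simeq_2 f\circ\mu_X$, precomposed with the continuous map $(\alpha,\id_X)\circ g$ (continuous into $X\times X$ with $\NP_2$, again by Proposition \ref{np2product}). The last homotopy applies the left-inverse hypothesis $\mu_X\circ(\alpha,\id_X)\simeq_2 c_{e_X}$, postcomposed with $f$ and precomposed with $g$, using that $f$ is pointed so that $f\circ c_{e_X}\circ g = c_{e_Y}$.

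The right homotopy-inverse case is identical with the roles of the two slots exchanged, setting $\beta' = f\circ\beta\circ g$ and verifying $\mu_Y\circ(\id_Y,\beta')\simeq_2 c_{e_Y}$ by the mirror-image chain. For the final claim, if the left and right homotopy-inverses of $X$ are homotopic, i.e. $\alpha\simeq_2\beta$, then postcomposing with $f$ and precomposing with $g$ (both continuous, so $\NP_2$-homotopy is preserved) gives $f\circ\alpha\circ g\simeq_2 f\circ\beta\circ g$, that is $\alpha'\simeq_2\beta'$, so the transported left and right homotopy-inverses of $Y$ are homotopic.

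The main obstacle---really the only nontrivial point---is the continuity bookkeeping for the various pair maps of the form $(\alpha',\id_Y)$, $(\alpha,\id_X)\circ g$, and the pair extracted from a homotopy in the first step; each of these carries the identity (or a projection) in one coordinate, so each relies on Proposition \ref{np2product} and would break in the $\NP_1$ setting. Once these continuity facts are in place, the homotopy manipulations are the standard ones---pre- and postcomposition with continuous maps preserve $\NP_2$-homotopy, as already used in the proof of Theorem \ref{htpeqassociative}.
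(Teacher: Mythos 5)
Your proposal is correct and follows essentially the same route as the paper: transport the inverse via $\tau = f\circ\alpha\circ g$, note that $\mu_Y\circ(\tau,\id_Y)$ is continuous only because of the $\NP_2$ pair-map Proposition \ref{np2product}, and verify $\mu_Y\circ(\tau,\id_Y)\simeq_2 c_{e_Y}$ by a chain of homotopies using the H-equivalence relations (the paper inserts $f\circ g\simeq_2\id$ by postcomposition and uses $g\circ\mu_Y\simeq_2\mu_X\circ(g\times g)$, whereas you insert it in the second slot and use $\mu_Y\circ(f\times f)\simeq_2 f\circ\mu_X$, but these are interchangeable). The final claim about homotopic left and right inverses is handled identically.
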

\begin{proof}
We will prove the statement about left homotopy-inverses. The statement for right homotopy-inverses is similar.

Let $f:(X,e_X) \to (Y,e_Y)$ and $g:(Y,e_Y) \to (X,e_X)$ be the maps realizing the homotopy equivalence, and let $\alpha:X\to X$ be a left homotopy-inverse for $X$. Define $\tau:Y\to Y$ by $$\tau = f \circ \alpha \circ g,$$ and we will show that $\tau$ is a left homotopy-inverse for $Y$. First note that $\mu_Y \circ (\tau ,\id_Y)$ is continuous in the $\NP_2$ category. And for $i\in \{1,2\}$ we have:
\[ \begin{split}
\mu_Y \circ (\tau ,\id_Y)
&\simeq_i f \circ g \circ \mu_Y \circ (\tau ,\id_Y) \\
&\simeq_i f \circ \mu_X \circ (g\times g) \circ ((f\circ \alpha \circ g) , \id_Y)  \\
&= f \circ \mu_X \circ (g\circ f \circ \alpha , \id_X) \circ g \\
%&= f \circ \mu_X \circ ((g \circ f) \times \id_X) \circ (\alpha , \id_X)  \circ g \\
&\simeq_i f \circ \mu_X \circ (\alpha , \id_X)  \circ g \\
&\simeq_i f\circ c_{e_X} \circ g = c_{e_Y} \circ f \circ g \simeq_i c_{e_Y}
\end{split}
\]
and so $\tau$ is a left homotopy-inverse for $Y$.
\end{proof}

We note that the homotopies used in the proof above will exist and are continuous even in the $\NP_1$ category. But the proof as a whole may fail in the $\NP_1$ category because $\mu_Y\circ (\tau , \id_Y)$ may not be continuous.

It is natural to ask if $X$ and $Y$ are H-equivalent and $X$ is unital, then must $Y$ be unital? This is not the case, as the following example demonstrates.
\begin{exa}
Let $X = \{x_0,x_1\}$ be a digital image of 2 adjacent points, with multiplication defined by $\mu_X(x_i,x_j) = x_0$ for all $i,j$. It is easy to check that $(X,x_0,\mu_X)$ is an $\NP_2$-digital H-space which is not unital, since for example there is no $e\in X$ with $\mu_X(x_1, e) = x_1$. Let $Y = \{y_0\}$ be a digital image of 1 point, with the obvious multiplication $\mu_Y$. Then $Y$ is a unital $\NP_2$-digital H-space. 

Both $X$ and $Y$ are contractible, so $X$ is H-equivalent to $Y$, but one is unital while the other is not.
\end{exa}

\bigskip

\section{Irreducible digital H-spaces}\label{irr}
A digital image is said to be \emph{$\NP_i$-irreducible} if it is not $\NP_i$-homotopy equivalent to a digital image of fewer points. Every digital image is homotopy equivalent to an irreducible digital image, and the same is true for H-spaces and H-equivalence:

\begin{lem}\label{htpeqirreducible}
Any $\NP_i$-digital H-space is H-equivalent to an $\NP_i$-irreducible digital H-space for $i \in \{1,2\}$.
\end{lem}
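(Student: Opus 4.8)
The plan is to induct on the number of points of $X$, using Theorem \ref{inducedmult} as the engine that transports the H-space structure across a single reduction, and the fact that H-equivalence is an equivalence relation to compose the reductions. For the base case, if $X$ is already $\NP_i$-irreducible then $(X,e,\mu)$ is H-equivalent to itself via the identity maps and there is nothing to prove.

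For the inductive step, suppose $X$ is not $\NP_i$-irreducible, so that $X$ is $\NP_i$-homotopy equivalent to some image of strictly fewer points. By the standard reduction theory for digital images one may take this equivalence to be a retraction collapsing a \emph{dominated} vertex $x$ onto a neighbor $y$ (every vertex adjacent to $x$ is also adjacent to $y$): the retraction $r:X\to Y:=X\setminus\{x\}$ sending $x\mapsto y$ and fixing all other vertices, together with the inclusion $\iota$, realizes an $\NP_i$-homotopy equivalence. To invoke Theorem \ref{inducedmult} I need this equivalence to be given by \emph{pointed} maps; granting that, the theorem immediately produces an H-space $(Y,e_Y,\mu_Y)$ with $\mu_Y=r\circ\mu\circ(\iota\times\iota)$ that is H-equivalent to $(X,e,\mu)$ and has strictly fewer points. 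Applying the inductive hypothesis to $(Y,e_Y,\mu_Y)$ and then transitivity of H-equivalence finishes the argument, and finiteness of $X$ guarantees termination.

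The main obstacle is precisely the pointedness demanded above: both Theorem \ref{inducedmult} and Definition \ref{hspaceequiv} require basepoint-preserving maps. When the dominated vertex $x$ can be chosen different from $e$, this is automatic, since $r$ and $\iota$ both fix $e$ and are therefore pointed maps of $(X,e)$ and $(Y,e)$; the reduction then goes through verbatim. The delicate case is when the \emph{only} dominated vertex is the basepoint $e$ itself, forcing the retraction to move $e$ to its dominating neighbor $y$. My plan here is to first relocate the basepoint: by Theorem \ref{homotopictoid}, every point of the component $X_e$ — in particular $y$ — satisfies $\mu_y\simeq_i\id_X\simeq_i\nu_y$, so $(X,y,\mu)$ is again an $\NP_i$-digital H-space on the same underlying image. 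Once $(X,e,\mu)$ and $(X,y,\mu)$ are known to be H-equivalent, the vertex $e$ has become a dominated vertex distinct from the basepoint $y$, and the easy case applies.

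I expect the genuine technical heart to be the justification of this basepoint-relocation H-equivalence: one must exhibit pointed maps $(X,e)\to(X,y)$ and $(X,y)\to(X,e)$ intertwining the common multiplication up to $\NP_i$-homotopy. The natural candidate in one direction is a translation-type map such as $\mu_y$, which is homotopic to $\id_X$ by Theorem \ref{homotopictoid} and is automatically a homotopy H-map, since both $\mu_y\circ\mu$ and $\mu\circ(\mu_y\times\mu_y)$ are $\NP_i$-homotopic to $\mu$ once one knows $\mu_y\times\mu_y\simeq_i\id_{X\times X}$. The remaining work lies in arranging a pointed map in the reverse direction and verifying that the two composites are $\NP_i$-homotopic to the respective identities; this is the step I anticipate being the most subtle, and it is where the distinction between $\NP_1$ and $\NP_2$ is most likely to intervene.
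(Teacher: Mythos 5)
Your overall strategy is genuinely different from the paper's, and it is not complete. The paper proves this lemma in one global step: it chooses an $\NP_i$-irreducible pointed image $(Y,e_Y)$ homotopy equivalent to $(X,e_X)$ by pointed maps $f,g$, sets $\mu_Y=f\circ\mu_X\circ(g\times g)$, and invokes Theorem \ref{inducedmult} once. You instead propose an induction on single-vertex folds (collapsing a dominated vertex), which forces you to confront, at every step, the possibility that the only dominated vertex is the basepoint itself. That is where your argument stops: you assert that $(X,e,\mu)$ and $(X,y,\mu)$ are H-equivalent after relocating the basepoint to the dominating neighbor $y$, but you never produce the required pointed map $(X,y)\to(X,e)$, and you explicitly flag this as the step you have not done. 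This is not a minor loose end; it is the entire content of the hard case. The natural candidate for the reverse map is a translation $\nu_{y'}$ by a right-inverse element $y'$ of $y$ (so that $\nu_{y'}(y)=e$), but the existence of inverse elements is only established in Theorem \ref{irreducibleinverses} \emph{for irreducible H-spaces} --- precisely the conclusion your induction is trying to reach --- so that route is circular. Nor can you simply send $y\mapsto e$ and fix everything else, since that map is continuous only when $y$ is dominated by $e$, which is the opposite of the situation you are in. As written, the inductive step does not close.

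To your credit, the pointedness issue you isolate is real, and the paper's own proof passes over it quickly by positing pointed maps $f:(X,e_X)\to(Y,e_Y)$ and $g:(Y,e_Y)\to(X,e_X)$ at the outset (one can always arrange $f$ pointed by defining $e_Y=f(e_X)$, but $g(e_Y)=e_X$ requires an argument or an adjustment of $g$). If you want to salvage your approach, the more promising move is to abandon the vertex-by-vertex induction and argue as the paper does: fix the irreducible target once, establish the existence of a pointed homotopy equivalence in both directions (using that the basepoint of an H-space is a homotopy unit, hence not an arbitrary vertex), and then apply Theorem \ref{inducedmult} a single time; H-equivalence being an equivalence relation is then not even needed.
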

\begin{proof}
Let $(X,e_X,\mu_X)$ be an $\NP_i$-digital H-space, and let $(Y,e_Y)$ be an $\NP_i$-irreducible digital image homotopy equivalent to $(X,e_X)$ by maps $f:(X,e_X) \to (Y,e_Y)$ and $g:(Y,e_Y) \to (X,e_X)$. Let $\mu_Y: Y\times Y \to Y$ be given by $$\mu_Y = f\circ \mu_X \circ (g\times g).$$ Then $(Y,e_Y,\mu_Y)$ is an $\NP_i$-digital H-space, and it is H-equivalent to $(X,e_X,\mu_X)$ by $f$ and $g$.
\end{proof}

In this section, we show some interesting properties of irreducible digital H-spaces.

The following is a slight variation of Lemma 2.8 of \cite{hmps}:
\begin{lem}\label{nonsurjection}
Let $X$ be a digital image. Then $X$ is $\NP_i$-irreducible if and only if no automorphism of $X$ is $\NP_i$-homotopic to a non-surjective map.
\end{lem}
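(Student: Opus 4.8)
The plan is to prove both implications by contrapositive, first reducing the statement to the single self-map $\id_X$. The key observation is that an automorphism $\phi$ of $X$ is $\NP_i$-homotopic to a non-surjective map if and only if $\id_X$ is. Indeed, if $\phi\simeq_i r$ with $r$ non-surjective, then since $\phi$ is an isomorphism its inverse $\phi^{-1}$ is continuous, and post-composing the homotopy with $\phi^{-1}$ yields $\phi^{-1}\circ r\simeq_i\phi^{-1}\circ\phi=\id_X$, where $\phi^{-1}\circ r$ is still non-surjective because $\phi^{-1}$ is a bijection; conversely $\id_X$ is itself an automorphism. So it suffices to show that $X$ is $\NP_i$-irreducible if and only if no map $\NP_i$-homotopic to $\id_X$ is non-surjective. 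This is essentially the content of Lemma 2.8 of \cite{hmps}, which I would cite; since the present setting must cover both $\NP_1$ and $\NP_2$, I would also reprove the substantive direction as below, since the argument is uniform in $i$.

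For the easy direction, suppose $X$ is reducible, so there is a digital image $Y$ with $|Y|<|X|$ and continuous maps $f:X\to Y$, $g:Y\to X$ with $g\circ f\simeq_i\id_X$. Then $g\circ f$ is a self-map of $X$ homotopic to the identity whose image lies in $g(Y)$, a set of at most $|Y|<|X|$ points; hence $g\circ f$ is non-surjective, exhibiting the automorphism $\id_X$ as homotopic to a non-surjection.

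The substantial direction is the converse: given a non-surjective $s:X\to X$ with $s\simeq_i\id_X$, I must produce a digital image of fewer points that is $\NP_i$-homotopy equivalent to $X$. Since $X$ is finite, the descending chain $s(X)\supseteq s^2(X)\supseteq\cdots$ stabilizes, so there is an $N$ with $Z:=s^N(X)$ satisfying $s(Z)=Z$; then $s|_Z$ is a bijection of $Z$, and as a permutation of a finite set its inverse is a power of $s|_Z$, hence continuous, so $s|_Z$ is in fact an automorphism of $Z$. Because $s$ is non-surjective we have $Z\subseteq s(X)\subsetneq X$, so $|Z|<|X|$. I then take the inclusion $j:Z\hookrightarrow X$ and the corestriction $r:X\to Z$ of $s^N$. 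Iterating the homotopy $s\simeq_i\id_X$ (repeatedly post-composing it, which preserves $\NP_i$-continuity) gives $s^N\simeq_i\id_X$, so $j\circ r=s^N\simeq_i\id_X$.

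The main obstacle is the other composite $r\circ j=(s|_Z)^N$: I need it to be $\NP_i$-homotopic to $\id_Z$ \emph{within} $Z$, whereas merely restricting the ambient homotopy only produces a homotopy valued in $X$. The fix is to post-compose the restricted homotopy with $s^N$: if $K:X\times[0,m]_\Z\to X$ realizes $\id_X\simeq_i s^N$, then $(z,t)\mapsto s^N(K(z,t))$ is $\NP_i$-continuous, takes values in $Z=s^N(X)$, and runs from $(s|_Z)^N$ to $(s|_Z)^{2N}$, so $(s|_Z)^N\simeq_i(s|_Z)^{2N}$ in $Z$; post-composing with the continuous inverse $(s|_Z)^{-N}$ cancels to give $\id_Z\simeq_i(s|_Z)^N=r\circ j$. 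Hence $X$ and $Z$ are $\NP_i$-homotopy equivalent with $|Z|<|X|$, so $X$ is reducible. I expect this homotopy-confinement step, together with the observation that $s|_Z$ is invertible on $Z$, to be the crux; everything else—that composition and restriction preserve $\NP_i$-continuity and that $\simeq_i$ is transitive—is routine and identical for $i=1$ and $i=2$.
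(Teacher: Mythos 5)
Your proof is correct, and its core reduction is exactly the paper's: you pass to the contrapositive and observe that an automorphism $\phi$ is $\NP_i$-homotopic to a non-surjection if and only if $\id_X$ is, by post-composing the homotopy with the continuous inverse $\phi^{-1}$. Where you diverge is that the paper stops there and simply cites Lemma 2.8 of \cite{hmps} for the equivalence ``$X$ reducible $\iff$ $\id_X$ homotopic to a non-surjection'' (noting that the cited proof works verbatim for both $\NP_1$ and $\NP_2$), whereas you reprove that lemma in full: the easy direction via the image of $g\circ f$, and the substantive direction via stabilization of the chain $s(X)\supseteq s^2(X)\supseteq\cdots$ at $Z=s^N(X)$, the observation that $s|_Z$ is a permutation whose inverse is a power of $s|_Z$ and hence continuous, and the homotopy-confinement trick of post-composing the restricted homotopy with $s^N$ so that it stays inside $Z$. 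That argument is sound (the key points --- $j\circ r=s^N\simeq_i\id_X$ by iterated post-composition, and $\id_Z\simeq_i(s|_Z)^N$ after cancelling $(s|_Z)^{-N}$ --- all check out), so what your version buys is self-containedness at the cost of length; the paper's version buys brevity at the cost of an external dependency.
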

\begin{proof}
In the contrapositive, the statement to be shown is that $X$ is reducible if and
only if some automorphism of $X$ is homotopic to a non-surjective map. Lemma 2.8 of \cite{hmps} shows that $X$ is reducible if and only if the identity function $\id_X : X \rightarrow X$ is homotopic to a non-surjective map (the entire setting of \cite{hmps} assumes the $\NP_1$ category, but the exact proof as written also works in the $\NP_2$ category). So it suffices to show that if $f$ is an automorphism homotopic to a non-surjection, then this implies that the identity function $\id_X : X \rightarrow X$ is homotopic to a non-surjection.

So we let $f:X\to X$ be an automorphism, and assume that $f\simeq_i g$, where $g$ is not a surjection. Then composing with $f^{-1}$ gives $$f\circ f^{-1} \simeq_i g\circ f^{-1},$$ and so $$\id_X \simeq_i g\circ f^{-1}$$ and $g\circ f^{-1}$ is not a surjection since $g$ is not a surjection.
\end{proof}

When $x\in X_e$, each multiplication map $\mu_x$ and $\nu_x$ is homotopic to the identity by Theorem \ref{homotopictoid}, and so the above gives:
\begin{cor}\label{muisomorphism}
If $X$ is $\NP_i$-irreducible and $x\in X_e$, then $\mu_x$ and $\nu_x$ are isomorphisms.
\end{cor}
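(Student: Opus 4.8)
The plan is to obtain surjectivity of $\mu_x$ and $\nu_x$ directly from the two preceding results, and then to promote surjectivity to the full isomorphism property using the finiteness of $X$. Since $x\in X_e$, Theorem \ref{homotopictoid} gives $\mu_x\simeq_i\id_X$ and $\nu_x\simeq_i\id_X$. The identity $\id_X$ is an automorphism of $X$, so because $X$ is $\NP_i$-irreducible, Lemma \ref{nonsurjection} guarantees that $\id_X$ is not $\NP_i$-homotopic to any non-surjective map. As $\mu_x\simeq_i\id_X$, the map $\mu_x$ cannot be non-surjective; hence $\mu_x$ is surjective, and the identical reasoning shows $\nu_x$ is surjective. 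This part of the argument treats $i=1$ and $i=2$ uniformly, since it only invokes the cited results as black boxes.

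Because $X$ is finite, a surjective self-map is automatically injective, so $\mu_x$ and $\nu_x$ are continuous bijections $X\to X$. It remains only to verify that their set-theoretic inverses are continuous, which is exactly what the definition of a $(\kappa,\kappa)$-isomorphism requires.

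I expect this last step --- showing that a continuous bijection of a finite digital image to itself is a digital isomorphism --- to be the main content of the proof, although it is purely combinatorial and independent of the choice of $i$. I would argue by counting edges. Writing an edge as an unordered pair $\{a,b\}$ of distinct adjacent points, continuity together with injectivity of $\mu_x$ sends each edge $\{a,b\}$ to an edge $\{\mu_x(a),\mu_x(b)\}$, and this assignment is injective on the finite edge set of $X$, hence bijective. Thus every edge of $X$ arises as the image of some edge. Consequently, if $\mu_x(u)\adj\mu_x(v)$ with $u\neq v$, then $\{\mu_x(u),\mu_x(v)\}=\{\mu_x(a),\mu_x(b)\}$ for some edge $\{a,b\}$, and bijectivity on vertices forces $\{u,v\}=\{a,b\}$, so $u\adj v$. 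Hence $\mu_x^{-1}$ preserves adjacency, making $\mu_x$ an isomorphism; the same argument applies verbatim to $\nu_x$.
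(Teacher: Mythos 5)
Your proof is correct and follows essentially the same route as the paper, which deduces the corollary directly from Theorem \ref{homotopictoid} and Lemma \ref{nonsurjection} without further comment. The only addition is that you spell out the standard finiteness/edge-counting argument showing a continuous bijection of a finite digital image to itself is an isomorphism, a step the paper leaves implicit.
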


%The above means that, assuming connectivity and unit, the element $e$ is the only element which can function as an identity:\todo{move this? since it requires unit when nothing else around here does}
%\begin{lem}\label{eunique}
%Let $X$ be a connected unital $\NP_i$-irreducible H-space. If $\mu(x,y) = x$, then $y=e$. Similarly if $\mu(x,y) = y$, then $x=e$. 
%\end{lem}
%\begin{proof}
%We will prove the first statement, and the second will follow by similar arguments. Since $X$ is unital, and $\mu(x,y)=x$, we have $\mu_x(y)=x = \mu_x(e)$. Since $\mu_x$ is an isomorphism, this implies that $y=e$ as desired.
%\end{proof}

Now we prove that, in a connected $\NP_i$-irreducible H-space, we must automatically have left and right inverses.

\begin{thm}\label{irreducibleinverses}
Let $(X,e,\mu)$ be a connected $\NP_i$-irreducible H-space. Then $(X,e,\mu)$ has left and right inverse maps $\alpha,\beta:X\to X$. If $X$ is homotopy-associative, then $\alpha \simeq_i \beta$. 
If $X$ is unital and associative, then $\alpha = \beta$.
\end{thm}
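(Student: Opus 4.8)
The plan is to construct $\alpha$ and $\beta$ explicitly from the translation maps $\nu_x$ and $\mu_x$, and then treat the three assertions as a hierarchy: existence of exact one-sided inverses, their homotopy under homotopy-associativity, and their equality under strict associativity with a strict unit.

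First I would settle existence and continuity. Since $X$ is connected we have $X=X_e$, so by Corollary \ref{muisomorphism} every $\mu_x$ and every $\nu_x$ is an isomorphism. I would then define $\beta(x)=\mu_x^{-1}(e)$ and $\alpha(x)=\nu_x^{-1}(e)$; equivalently $\beta(x)$ is the unique point with $\mu(x,\beta(x))=e$ and $\alpha(x)$ the unique point with $\mu(\alpha(x),x)=e$. By construction $\mu\circ(\id_X,\beta)=c_e=\mu\circ(\alpha,\id_X)$, and as constant maps these are automatically continuous, so $\alpha$ and $\beta$ will be genuine left and right inverse maps once I verify that they are themselves continuous. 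For $\beta$: if $x\sim x'$ then $(x,\beta(x))$ and $(x',\beta(x))$ differ in a single coordinate, hence are $\NP_1$-adjacent and therefore $\NP_i$-adjacent, so $\NP_i$-continuity of $\mu$ gives $e=\mu(x,\beta(x))\sim\mu_{x'}(\beta(x))$; applying the continuous map $\mu_{x'}^{-1}$ yields $\beta(x')=\mu_{x'}^{-1}(e)\sim\beta(x)$. The argument for $\alpha$ is identical using $\nu_{x'}$, and it works uniformly in $i\in\{1,2\}$ precisely because the varying pair is $\NP_1$-adjacent. I would also record the identity $\beta(x)=y\iff\mu(x,y)=e\iff x=\alpha(y)$, so in fact $\alpha=\beta^{-1}$ and both are isomorphisms, in agreement with Remark \ref{htpinv}.

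The strict case is then the easy endpoint: assuming $X$ unital and associative, the computation
$$\alpha(x)=\mu(\alpha(x),e)=\mu(\alpha(x),\mu(x,\beta(x)))=\mu(\mu(\alpha(x),x),\beta(x))=\mu(e,\beta(x))=\beta(x)$$
is a pointwise identity needing no continuity, using the strict unit in the outer equalities, the inverse identities in the inner ones, and strict associativity in the middle. The substantive assertion is the middle one, and here I would follow the classical chain
$$\alpha\;\simeq_i\;\nu_e\circ\alpha\;=\;\mu\circ(\alpha,\,\mu\circ(\id_X,\beta))\;\simeq_i\;\mu\circ(\mu\circ(\alpha,\id_X),\,\beta)\;=\;\mu_e\circ\beta\;\simeq_i\;\beta,$$
in which the equalities hold because $\mu\circ(\id_X,\beta)=c_e=\mu\circ(\alpha,\id_X)$, the two outer homotopies come from $\nu_e\simeq_i\id_X\simeq_i\mu_e$ (Theorem \ref{homotopictoid}), and the central $\simeq_i$ is exactly homotopy-associativity. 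Every map displayed is honestly continuous, since the pairs $(\alpha,c_e)$ and $(c_e,\beta)$ each carry a constant coordinate and so are $\NP_1$-continuous; only the central step is not formal.

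That central step is where I expect the real obstacle. Homotopy-associativity provides a homotopy between $\mu\circ(\mu\times\id_X)$ and $\mu\circ(\id_X\times\mu)$ on $X\times X\times X$, and the classical proof simply restricts it along the map $x\mapsto(\alpha(x),x,\beta(x))$. In the digital setting that map is not $\NP_i$-continuous, because all three coordinates vary at once (the same failure that makes the diagonal discontinuous, cf.\ Remark \ref{htpinv}), so the restricted homotopy can break continuity at its intermediate stages even though its two endpoints $\nu_e\circ\alpha$ and $\mu_e\circ\beta$ are continuous. I would first note that in the $\NP_2$ category the difficulty evaporates: by the contractibility result of Section \ref{cont} a connected $\NP_2$-digital H-space is contractible, so a connected $\NP_2$-irreducible one is a single point and $\alpha=\beta=\id_X$ is trivial. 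The genuine content, and the genuine obstacle, is thus the $\NP_1$ case, where connected H-spaces need not be contractible. There I would abandon restriction of the full homotopy and instead try to exploit the extra structure not present classically, namely $\alpha=\beta^{-1}$ together with the fact that $\mu_x,\nu_x$ are isomorphisms homotopic to $\id_X$ for every $x$; the aim would be to realize $\alpha\simeq_1\beta$ directly, most plausibly through the single-step $\NP_1$-homotopy criterion of \cite{stae21} and a combinatorial construction of the connecting homotopy, rather than through the formal associativity square. Verifying that such a construction exists and stays $\NP_1$-continuous is the delicate point I expect to consume most of the work.
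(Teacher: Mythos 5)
Your treatment of the first and third assertions is correct and is essentially the paper's own argument: you set $\beta(x)=\mu_x^{-1}(e)$ and $\alpha(x)=\nu_x^{-1}(e)$ via Corollary \ref{muisomorphism}, prove continuity of $\beta$ by applying $\NP_i$-continuity of $\mu$ to the pair $(x,\beta(x))$, $(x',\beta(x))$ (which differ in one coordinate) and then the isomorphism $\mu_{x'}^{-1}$, and handle the unital associative case by the pointwise five-term computation --- the paper phrases that last part as ``all homotopies become equalities,'' which is the same calculation. Your added observations that $\alpha=\beta^{-1}$ and that the $\NP_2$ case of the middle assertion collapses to a one-point space via Theorem \ref{np2contractible} are both correct.

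The gap is the second assertion in the $\NP_1$ case. You write down exactly the chain the paper uses, correctly note that the inner steps are equalities because $\mu\circ(\id_X,\beta)=c_e=\mu\circ(\alpha,\id_X)$, and then stop at the central step $\mu\circ(\alpha,\mu\circ(\id_X,\beta))\simeq_i\mu\circ(\mu\circ(\alpha,\id_X),\beta)$, offering only an unspecified ``combinatorial construction'' via the single-step criterion of \cite{stae21}. Since that construction is never exhibited, the claim $\alpha\simeq_1\beta$ is not proved in your write-up. For comparison, the paper simply invokes homotopy-associativity at that step, which amounts to restricting the associativity homotopy $H:X\times X\times X\times[0,m]_\Z\to X$ along $x\mapsto(\alpha(x),x,\beta(x))$; your objection that this map is not $\NP_i$-continuous (all three coordinates move simultaneously, since $\alpha$ and $\beta$ are isomorphisms) is substantive and is not addressed in the paper's proof either. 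So you have correctly isolated the one step that carries real content, but you have not supplied an argument for it: until you either justify that the restricted homotopy remains continuous or construct the homotopy $\alpha\simeq_1\beta$ directly, the middle assertion remains open in your proposal.
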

\begin{proof}
First we show that a right inverse exists. By Corollary \ref{muisomorphism}, the map $\mu_x:X\to X$ is an isomorphism. Let $x' = \mu_x^{-1}(e)$. 

Since $\mu_x$ is an isomorphism mapping $e$ to $x$, we will have $\mu_x(X_e) = X_e$, and so $x' = \mu_x^{-1}(e) \in X_e$. And we have: 
\[ \mu(x,x') = \mu_x(x') = \mu_x(\mu_x^{-1}(e)) = e, \]
and so $x'$ is a right-inverse element of $x$.

Now we show that in fact $x'$ is the only right-inverse element of $x$. Assume that we have some other element $x''$ with $\mu(x,x'')=e$. Then we have $$\mu_x(x') = \mu_x(x''),$$ and since $\mu_x$ is invertible this means $x'=x''$ and so the right-inverse element is unique.

Let $\beta: X \to X$ be the function sending each $x$ to its unique right inverse element. Since $$\mu(x,\beta(x))=e,$$ we have $$\mu \circ (\id_X,\beta) = c_e,$$ and so $\beta$ is a right inverse map, provided that it is continuous. To see that $\beta$ is continuous, take $a,b\in X$ with $a\sim b$, and we will show that $\beta(a)\sim \beta(b)$. We have:
\[ \mu(b,\beta(b)) = e = \mu(a,\beta(a)) \sim \mu(b,\beta(a)) \]
where the last adjacency is by continuity of $\mu$, since $a\sim b$. The above means that $$\mu_a(\beta(a) \sim \mu_b(\beta(a)).$$ Since $X$ is connected, $\mu_b$ is an isomorphism by Corollary \ref{muisomorphism}. Thus we have $\beta(b) \sim \beta(a)$ as desired.

So far we have shown that $X$ has a right inverse map $\beta$. Similar arguments show that it also has a left inverse map $\alpha$, and we have proven the first statement of the theorem.

Now assume that $X$ is homotopy-associative, and we will show that $\alpha \simeq_i \beta$. This follows from various homotopies already established. Since $X$ is an H-space, we have 
$$
\mu \circ (c_e, \id_X) \simeq_i \id_X \simeq_i \mu \circ (\id_X,c_e),
$$ and since $\alpha$ and $\beta$ are left- and right-inverses, we have 
$$
\mu\circ (\alpha,\id_X) = c_e = \mu\circ (\id_X,\beta).
$$ 
Combining these, together with homotopy-associativity, gives:
\[
\begin{split}
\alpha &\simeq_i \mu \circ (\id_X,c_e) \circ \alpha \\
&= \mu \circ (\alpha, c_e) \\
&\simeq_i \mu \circ (\alpha, \mu \circ (\id_X,\beta)) \\
&\simeq_i \mu \circ (\mu \circ (\alpha, \id_X), \beta) \\
&\simeq_i \mu \circ (c_e, \beta) \\
&= \mu \circ (c_e, \id_X) \circ \beta \\
& \simeq_i \beta
\end{split}
\]

If $X$ is unital and associative, all homotopies above become equalities, and so $\alpha = \beta$, proving the last statement of the theorem.
\end{proof}

The following is an improvement to Lemma \ref{htpeqirreducible}, showing that connected H-spaces are H-equivalent to irreducible left-unital H-spaces.
\begin{lem}\label{irreducibleleftunit}
Any connected $\NP_i$-digital H-space $(X,e,\mu)$ is homotopy equivalent as a digital H-space to a left-unital irreducible $\NP_i$-digital H-space $(X,p,\tau)$. 
If $\mu(e,x) = \mu(x,e)$ for all $X$, then $(X,p,\tau)$ is unital for $i \in \{1,2\}$.
\end{lem}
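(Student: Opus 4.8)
The plan is to first replace $X$ by an irreducible model and then \emph{rectify} its multiplication so that the homotopy unit becomes an exact unit on the left. I would begin by invoking Lemma \ref{htpeqirreducible} to replace $(X,e,\mu)$ by an H-equivalent irreducible $\NP_i$-digital H-space $(Y,e_Y,\mu_Y)$, where $\mu_Y = f\circ\mu\circ(g\times g)$ for the pointed equivalences $f\colon(X,e_X)\to(Y,e_Y)$ and $g\colon(Y,e_Y)\to(X,e_X)$. Since $\id_Y\simeq_i f\circ g$, every point of $Y$ lies in the same component as a point of the connected set $f(X)$, so $Y$ is again connected; in any case $e_Y\in Y_{e_Y}$, which is all that the following steps actually require. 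By transitivity of H-equivalence it then suffices to produce on $Y$ a left-unital multiplication $\tau$ that is H-equivalent to $\mu_Y$.

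The key step is the rectification. Set $\lambda=(\mu_Y)_{e_Y}$, so that $\lambda(y)=\mu_Y(e_Y,y)$. Because $Y$ is irreducible and $e_Y\in Y_{e_Y}$, Corollary \ref{muisomorphism} shows that $\lambda$ is an isomorphism, and Theorem \ref{homotopictoid} gives $\lambda\simeq_i\id_Y$; composing this homotopy with $\lambda^{-1}$ yields $\lambda^{-1}\simeq_i\id_Y$. I would then define $p=e_Y$ and $\tau=\lambda^{-1}\circ\mu_Y$, which is $\NP_i$-continuous since $\lambda^{-1}$ is an isomorphism and $\mu_Y$ is $\NP_i$-continuous. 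Now $\tau(e_Y,y)=\lambda^{-1}(\mu_Y(e_Y,y))=\lambda^{-1}(\lambda(y))=y$, so $e_Y$ is an \emph{exact} left unit, while $\tau\circ(\id_Y,c_{e_Y})=\lambda^{-1}\circ\nu_{e_Y}\simeq_i\id_Y$ using $\nu_{e_Y}\simeq_i\id_Y$ from Theorem \ref{homotopictoid}. Hence $(Y,e_Y,\tau)$ is a left-unital $\NP_i$-digital H-space. Finally, $\tau=\lambda^{-1}\circ\mu_Y\simeq_i\id_Y\circ\mu_Y=\mu_Y$, so the identity maps realize an H-equivalence between $(Y,e_Y,\mu_Y)$ and $(Y,e_Y,\tau)$ in the sense of Definition \ref{hspaceequiv}; combined with the first step this establishes the first statement.

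For the second statement the point is that the hypothesis survives the passage to the irreducible model. Since $f$ and $g$ are pointed, $\mu_Y(e_Y,y)=f(\mu(e,g(y)))$ and $\mu_Y(y,e_Y)=f(\mu(g(y),e))$; thus if $\mu(e,x)=\mu(x,e)$ for all $x$, then $\mu_Y(e_Y,y)=\mu_Y(y,e_Y)$ for all $y$, i.e. $\lambda=\nu_{e_Y}$. In that case $\tau(y,e_Y)=\lambda^{-1}(\nu_{e_Y}(y))=\lambda^{-1}(\lambda(y))=y$, so $e_Y$ is also an exact right unit and $(Y,e_Y,\tau)$ is unital, for both $i\in\{1,2\}$.

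The main obstacle I anticipate is bookkeeping rather than a genuine difficulty: one must confirm that the homotopy $\lambda^{-1}\simeq_i\id_Y$ of self-maps of $Y$ composes correctly with the $\NP_i$-continuous map $\mu_Y$ (and that $\lambda^{-1}\circ\nu_{e_Y}\simeq_i\id_Y$ as self-maps) so as to yield genuine $\NP_i$-continuous homotopies in both the $\NP_1$ and $\NP_2$ categories. This is exactly the kind of pre- and post-composition of homotopies used repeatedly in Section \ref{prop}, so I expect it to go through, but it is the step that should be verified with care, since the behaviour of homotopies under composition is where the $\NP_1$ and $\NP_2$ theories most often diverge.
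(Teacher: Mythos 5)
Your proof is correct, and it follows the same overall strategy as the paper --- reduce to an irreducible model via Lemma \ref{htpeqirreducible}, observe that left-multiplication by the homotopy unit is an isomorphism by Corollary \ref{muisomorphism}, and then rectify the multiplication by that isomorphism --- but the rectification itself is genuinely different. The paper pre-composes, setting $\tau=\mu\circ(\mu_e^{-1}\times\mu_e^{-1})$ and moving the basepoint to $p=\mu_e(e)=\mu(e,e)$, with the H-equivalence realized by $f=\mu_e$, $g=\mu_e^{-1}$; you post-compose, setting $\tau=\mu_e^{-1}\circ\mu$ and keeping $p=e$, so the H-equivalence is realized by the identity maps and the left-unit check $\tau(e,y)=\mu_e^{-1}(\mu_e(y))=y$ is immediate. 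Both versions rest on the same continuity bookkeeping (whiskering the homotopy $\mu_e^{-1}\simeq_i\id$ with the $\NP_i$-continuous $\mu$ via Proposition \ref{product}), and your worry at the end is unfounded for exactly the reason you suspect: these compositions are the ones used throughout Section \ref{prop}. One place where your write-up is actually more careful than the paper's: for the second statement the paper simply says ``we may assume WLOG that $(X,e,\mu)$ is irreducible'' and then assumes $\mu(e,x)=\mu(x,e)$ on the irreducible model, whereas you explicitly check that the hypothesis transfers through the pointed equivalences ($\mu_Y(e_Y,y)=f(\mu(e,g(y)))=f(\mu(g(y),e))=\mu_Y(y,e_Y)$), which is a point worth making explicit since both proofs depend on it.
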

\begin{proof}
By Lemma \ref{htpeqirreducible}, we may assume without loss of generality that $(X,e,\mu)$ is $\NP_i$-irreducible.

For $x\in X$, let $\mu_x,\nu_x:X\to X$ denote left- and right-multiplications by $x$. By Corollary \ref{muisomorphism}, these maps are isomorphisms.

We define $p = \mu_e(e)$, and $\tau:X\to X$ by $$\tau = \mu\circ (\mu_e^{-1} \times \mu_e^{-1}).$$ To show $(X,e,\mu)$ and $(X,p,\tau)$ are H-equivalent, we will verify Definition \ref{hspaceequiv} using the maps $f=\mu_e:X \to X$ and $g=\mu_e^{-1}:X\to X$ for our homotopy equivalence. Clearly we have $$f\circ g = \id_X = g\circ f.$$ By Theorem \ref{homotopictoid} we have $$\mu_e \simeq_i \id_X \simeq_i \mu_e^{-1},$$ and also we have:
\[ 
\tau \circ (\mu_e \times \mu_e) = \mu \circ (\mu_e^{-1} \times \mu_e^{-1}) \circ (\mu_e \times \mu_e) = \mu \simeq_i \mu_e \circ \mu
\]
and 
\[ 
\mu_e^{-1} \circ \tau = \mu_e^{-1} \circ \mu\circ (\mu_e^{-1} \times \mu_e^{-1}) \simeq_i \mu \simeq_i \mu \circ (\mu_e^{-1}\times \mu_e^{-1})
\]
and thus $(X,e,\mu)$ and $(X,p,\tau)$ are H-equivalent.

Now to show $(X,\tau,p)$ is left-unital, take $a\in X$, and we have:
\[ \tau(p,a) = \mu(\mu_e^{-1}(p), \mu_e^{-1}(a)) = \mu(e,\mu_e^{-1}(a)) = \mu_e(\mu_e^{-1}(a)) = a \]
as desired.

For the second statement, assume additionally that $$\mu(e,x) = \mu(x,e)$$ for all $x\in X$, which is to say $\mu_e = \nu_e$. Then we have:
\[ \tau(a,p) = \mu(\mu_e^{-1}(a), \mu_e^{-1}(p)) = \mu(\nu_e^{-1}(a), e) = \nu_e(\nu_e^{-1}(a)) = a \]
and so $(X,p,\tau)$ is left-unital.
\end{proof}

We note that the result above still holds exactly as written using ``right-unital'' instead of ``left-unital''. So any H-space is H-equivalent to either a left-unital or a right-unital one, though we have not shown that we can achieve both simultaneously. 

Our classification of $\NP_2$-digital H-spaces in Theorem \ref{np2classification} will imply that any $\NP_2$-digital H-space is indeed H-equivalent to a unital H-space, but we are left with the following open question for the $\NP_1$ category:
\begin{quest}
Is every $\NP_1$-digital H-space H-equivalent to a unital H-space?
\end{quest}

\bigskip

\section{Any connected $\NP_2$-digital H-space is contractible}\label{cont}
A digital image is called \emph{$\NP_i$-rigid} when the identity is not $\NP_i$-homotopic to any other map. 
The paper \cite{hmps} presents examples of $\NP_1$-irreducible digital images which are not $\NP_1$-rigid. The standard example is the simple cycle of $n$ points with $n\ge 5$, in which a rotation map is $\NP_1$-homotopic to, but not equal to, the identity. In the $\NP_2$ category, this graph is $\NP_2$-irreducible, but it is also $\NP_2$-rigid because the rotation map is not $\NP_2$-homotopic to the identity. 

In fact we can show that there is no example of an irreducible non-rigid digital image in the $\NP_2$ category:
\begin{lem}\label{np2rigid}
Any $\NP_2$-irreducible digital image is $\NP_2$-rigid.
\end{lem}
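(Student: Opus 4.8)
The plan is to prove the contrapositive: if $X$ is not $\NP_2$-rigid, then $X$ is $\NP_2$-reducible. So suppose the identity $\id_X$ is $\NP_2$-homotopic to some map $g \neq \id_X$. Writing the homotopy as a sequence of single steps $\id_X = g_0 \simeq_2 g_1 \simeq_2 \cdots \simeq_2 g_m = g$ and letting $k$ be the least index with $g_k \neq \id_X$, I reduce immediately to the case of a single-step homotopy $\id_X \simeq_2 h$ with $h := g_k \neq \id_X$, since $g_{k-1} = \id_X$ by minimality.

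Next I extract the combinatorial content of such a single step. By Proposition 2.4 of \cite{stae21}, a single-step $\NP_2$-homotopy $\id_X \simeq_2 h$ is equivalent to the condition that $a \sim h(b)$ whenever $a \sim b$. Fixing $b$ and letting $a$ range over the closed neighborhood $N[b] = \{a : a\sim b\}$, this says exactly that $N[b] \subseteq N[h(b)]$ for every $b \in X$; in particular, taking $a=b$ gives $b \sim h(b)$.

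The heart of the argument is to turn a single point moved by $h$ into a non-surjective self-map homotopic to the identity. Since $h \neq \id_X$, I choose $b_0$ with $c := h(b_0) \neq b_0$, so that $b_0 \sim c$ and $N[b_0] \subseteq N[c]$. I then define $f : X \to X$ to fix every point except $b_0$, which is sent to $c$. I will check that $f$ is continuous and that $\id_X \simeq_2 f$ in a single step, both of which follow directly from the containment $N[b_0] \subseteq N[c]$ via the characterization above: every neighbor of $b_0$ is also a neighbor of $c$, so replacing $b_0$ by $c$ preserves all the required adjacencies. Moreover $f$ is non-surjective, because $b_0$ could be in its image only if some point mapped to it, which the definition of $f$ excludes. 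Thus $\id_X$ is $\NP_2$-homotopic to the non-surjection $f$, and by the $\NP_2$ version of Lemma 2.8 of \cite{hmps} (as invoked in the proof of Lemma \ref{nonsurjection}), $X$ is $\NP_2$-reducible.

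The main obstacle, and the step that genuinely uses $\NP_2$ rather than $\NP_1$, is the translation in the second paragraph: the $\NP_2$-homotopy condition forces the strong domination statement $N[b] \subseteq N[h(b)]$ for all $b$, which has no $\NP_1$ analogue, since the corresponding $\NP_1$ condition (Proposition 1.4 of \cite{stae21}) yields only $b \sim h(b)$ — a condition satisfied, for instance, by a rotation of a long cycle. Once this domination is in hand, the folding construction of $f$ is routine, so the real care needed is in applying the single-step characterization of $\NP_2$-homotopy correctly and in justifying the reduction to a single step issuing from the identity.
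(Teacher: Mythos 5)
Your proof is correct, and it follows the same overall skeleton as the paper's: reduce to a map that differs from the identity at exactly one point, observe that such a map is automatically non-surjective, and conclude reducibility via the $\NP_2$ version of Lemma 2.8 of \cite{hmps} (Lemma \ref{nonsurjection}). The one genuine difference is in how you obtain the one-point-moved map. The paper simply cites Theorem 3.2 of \cite{stae21}, which says every $\NP_2$-homotopy can be realized by steps changing only one point at a time. You instead derive it directly: from the single-step characterization (Proposition 2.4 of \cite{stae21}) you extract the closed-neighborhood domination $N[b]\subseteq N[h(b)]$, and then fold a single moved point $b_0$ onto $h(b_0)$ by hand, checking continuity and the single-step homotopy from that same containment. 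This buys a self-contained argument that does not depend on the ``one point at a time'' realization theorem, and it isolates exactly the $\NP_2$-specific ingredient (the domination condition, which has no $\NP_1$ analogue, as your rotation-of-a-cycle remark correctly illustrates); the cost is a somewhat longer write-up than the paper's two-line invocation of the cited theorem. All the individual steps you sketch (continuity of $f$, the single-step homotopy $\id_X\simeq_2 f$, non-surjectivity) do check out.
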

\begin{proof}
Let $X$ be $\NP_2$-irreducible. To obtain a contradiction, assume that $X$ is not $\NP_2$-rigid. This means there is a map $f:X\to X$ different from the identity with $f\simeq_2 \id_X$. Theorem 3.2 of \cite{stae21} shows that any $\NP_2$-homotopy can be realized by a homotopy whose steps change only one point at a time. (The same result independently appears in \cite{chih}.) That is, we may assume that $f$ agrees with $\id_X$ at all points except one.

Let $c\in X$ be the single point for which $f(c)\neq c$, and $f(x)=x$ for all $x\neq c$. This means that $f(x)\neq c$ for all $x\in X$, and so $f$ is non-surjective. Then $\id_X \simeq_2 f$ means that the identity is $\NP_2$-homotopic to a non-surjection, and so by Lemma \ref{nonsurjection} our digital image $X$ is $\NP_2$-reducible as desired.
\end{proof}

The above lets us prove a very strong condition on any $\NP_2$-digital H-space.

\begin{thm}\label{np2contractible}
Let $(X,e,\mu)$ be an $\NP_2$-digital H-space. Then $X_e$, the connected component of $e$, is $\NP_2$-contractible.
\end{thm}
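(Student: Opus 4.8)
The plan is to reduce to the connected irreducible case and then exploit $\NP_2$-rigidity to collapse the space to a point. First I would observe that $X_e$ is itself a connected $\NP_2$-digital H-space. For $x,y\in X_e$ we have $\mu_x\simeq_2\id_X$ by Theorem \ref{homotopictoid}, and the homotopy traces a path from $\mu(x,y)=\mu_x(y)$ to $y$, so $\mu(x,y)$ lies in the component $X_e$; thus $\mu$ restricts to a map $X_e\times X_e\to X_e$. The homotopies witnessing the two unit axioms likewise stay in $X_e$ (they connect each point of $X_e$ to its image, which remains in the same component), so $(X_e,e,\mu|_{X_e\times X_e})$ is a connected $\NP_2$-digital H-space. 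Hence it suffices to prove the section's headline claim: every connected $\NP_2$-digital H-space is $\NP_2$-contractible.

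So I would assume $X$ is connected, i.e. $X=X_e$. By Lemma \ref{htpeqirreducible}, $X$ is H-equivalent — in particular homotopy equivalent — to an $\NP_2$-irreducible digital H-space $(Y,e_Y,\mu_Y)$. Since contractibility is preserved by homotopy equivalence (by transitivity of $\simeq_2$), it is enough to show $Y$ is contractible. Moreover $Y$ is connected: writing $f,g$ for the homotopy equivalence, $f(X)$ is connected, and the homotopy $f\circ g\simeq_2\id_Y$ links each $y\in Y$ to $f(g(y))\in f(X)$, so all of $Y$ lies in a single component. In particular $Y=Y_{e_Y}$.

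The key step is then to invoke rigidity. By Lemma \ref{np2rigid}, the irreducible image $Y$ is $\NP_2$-rigid, so the identity is $\NP_2$-homotopic to no map other than itself. For every $x\in Y=Y_{e_Y}$, Theorem \ref{homotopictoid} gives $\mu_x\simeq_2\id_Y\simeq_2\nu_x$; rigidity upgrades these homotopies to equalities $\mu_x=\id_Y=\nu_x$. Thus $\mu(x,y)=y$ for all $y$ (from $\mu_x=\id_Y$) and $\mu(x,y)=x$ for all $x$ (from $\nu_y=\id_Y$), which forces $x=y$ for all $x,y\in Y$. Therefore $Y$ is a single point, which is trivially contractible, and hence $X$ and $X_e$ are $\NP_2$-contractible as well.

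The main obstacle is recognizing that the combination of Theorem \ref{homotopictoid} — all left and right translations are homotopic to the identity — with $\NP_2$-rigidity forces every connected irreducible $\NP_2$-digital H-space to be a single point; once this collapse is seen, the argument is essentially complete. The only routine care required is in the two reductions: verifying that $\mu$ genuinely restricts to $X_e$, and checking that passing to an irreducible model via Lemma \ref{htpeqirreducible} preserves connectedness.
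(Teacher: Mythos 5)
Your proof is correct and follows essentially the same route as the paper: pass to an irreducible model via Lemma \ref{htpeqirreducible}, invoke $\NP_2$-rigidity (Lemma \ref{np2rigid}) to upgrade the homotopies $\mu_x\simeq_2\id$ from Theorem \ref{homotopictoid} to equalities, and conclude. The only (harmless) differences are that you first restrict the H-space structure to $X_e$ before reducing --- which actually makes the paper's ``we may assume $X$ is irreducible'' step more watertight --- and that your endgame uses $\mu_x=\id=\nu_x$ to show the irreducible model is literally a single point, whereas the paper uses $\mu(x,e)=e$ together with the unit homotopy $\mu\circ(\id_X,c_e)\simeq_2\id_X$ to contract $X_e$ directly.
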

\begin{proof}
By Lemma \ref{htpeqirreducible}, we may assume that $X$ is $\NP_2$-irreducible, and thus by Lemma \ref{np2rigid} it is $\NP_2$-rigid. Thus the component $X_e$ is itself irreducible and rigid. By Theorem \ref{homotopictoid}, we have $\mu_x \simeq_2 \id_X$ for all $x\in X_e$, and thus by rigidity we have $\mu_x = \id_X$ for all $x \in X_e$.

We now have $$\mu \circ (\id_X,c_e)(x) = \mu(x,e) = e$$ for all $x\in X_e$, and thus $\mu\circ (\id_X,c_e)$ is constant on $X_e$. But since $(X,e,\mu)$ is an $\NP_2$-digital H-space, we have $$\mu\circ (\id_X,c_e)\simeq_2 \id_X.$$ Restricting to $X_e$, this implies that $\id_{X_e} \simeq_2 c_e$, which means that $X_e$ is $\NP_2$-contractible as desired.
\end{proof}

\begin{cor}
Any connected $\NP_2$-digital H-space is $\NP_2$-contractible.
\end{cor}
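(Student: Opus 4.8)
The plan is to derive this immediately from Theorem \ref{np2contractible}, since the corollary is simply the special case in which the H-space happens to be connected. The only conceptual step is to recognize that connectedness collapses the distinction between the whole space $X$ and the component $X_e$ of the basepoint.

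First I would recall that, by definition in the Preliminaries section, a digital image is connected precisely when it consists of a single connected component. Thus if $(X,e,\mu)$ is a connected $\NP_2$-digital H-space, then every point of $X$ has a path to $e$, and so the connected component $X_e$ of $e$ is all of $X$; that is, $X = X_e$.

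Next I would invoke Theorem \ref{np2contractible}, which asserts that for any $\NP_2$-digital H-space $(X,e,\mu)$ the component $X_e$ is $\NP_2$-contractible. Substituting $X = X_e$ from the previous step yields that $X$ itself is $\NP_2$-contractible, which is exactly the claim.

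I do not anticipate any genuine obstacle here: all of the substantive work (the passage through $\NP_2$-irreducibility and $\NP_2$-rigidity, and the argument that $\mu_x = \id_X$ forces $\id_{X_e} \simeq_2 c_e$) has already been carried out in the proof of Theorem \ref{np2contractible}. The corollary is a direct specialization, and the proof should consist of little more than the observation $X = X_e$ followed by the citation of the theorem.
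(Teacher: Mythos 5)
Your proposal is correct and matches exactly what the paper intends: the corollary is stated without proof as an immediate consequence of Theorem \ref{np2contractible}, and the only observation needed is that connectedness gives $X = X_e$. Nothing further is required.
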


The results above are reminiscent of \cite{dtg}, which shows that any $\NP_2$-digital topological group consists only of contractible components (in fact, they must all be complete graphs). The same is not necessarily true of $\NP_2$-digital H-spaces: in Section \ref{examples} we give a construction of $\NP_2$-digital H-spaces in which the identity component is contractible, but the other components may not be.

\bigskip

\section{Pointed vs non-pointed homotopies} \label{pointed}
Given two pointed digital images $(X,a)$ and $(Y,b)$ and pointed maps $f,g:(X,a) \to (Y,b)$ (``pointed map'' means $f(a)=b$ and $g(a)=b$), we say $f$ and $g$ are ``pointed homotopic'' when they are homotopic by a homotopy in which $a$ maps to $b$ in every intermediate stage.

Given two pointed digital images $(X,a)$ and $(Y,b)$, we say they are \emph{$\NP_i$-pointed homotopy equivalent} when there are maps $f:X\to Y$ and $g:Y\to X$ with $g\circ f \simeq_i \id_X$ and $f\circ g\simeq_i \id_Y$ by pointed homotopies.

In our definition of H-space, we require that $$\mu\circ (c_e, \id_X)\simeq_i \id_X,$$ but we did not require that this homotopy be pointed. Say that an $\NP_i$-digital H-space $(X,e,\mu)$ is a \emph{pointed $\NP_i$-digital H-space} when these homotopies used in the definition are pointed homotopies. Following Definition \ref{hspaceequiv}, we say that two pointed $\NP_i$-digital H-spaces are \emph{pointed H-equivalent} when there exist homotopy equivalences as in Definition \ref{hspaceequiv} such that all of the homotopies in that definition are pointed.

In the classical theory of H-spaces, under mild topological assumptions (say, if the space is a CW-complex) any homotopy may be made pointed, and so the pointedness condition is not important. But in our digital setting the assumption of pointedness will make a difference in certain examples.

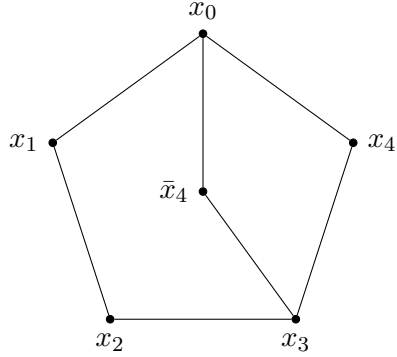
\begin{figure}[h!]
\begin{tikzpicture}[scale=2.1]
	\node[vertex] (y0)   at (0,1)  [label=above :$x_0$] {};
	\node[vertex] (y1)   at (162:1) [label=left :$x_1$] {}; 
	\node[vertex] (y2) at (234:1) [label=below :$x_2$] {};
	\node[vertex] (y3) at (306:1)  [label=below :$x_3$] {};
	\node[vertex] (y4) at (18:1) [label=right :$x_4$] {};
	\node[vertex] (y4p) at (0,0) [label=left :$\bar x_4$] {};
		
	\draw (y0) -- (y1) -- (y2) -- (y3) -- (y4) -- (y0);
	\draw (y3) -- (y4p) -- (y0);
\end{tikzpicture}
\vskip 0.1cm
\caption{\label{5twistfig} An H-space which is not a pointed H-space}
\end{figure}

First we give an example of an $\NP_1$-digital H-space which is not pointed. Let $X$ be the digital image in Figure \ref{5twistfig}, which was studied in \cite{hmps}.  This $X$ behaves in most cases like a simple cycle of 5 points, but one point has been ``replicated''. Let $\rho:X\to X$ be the function given by $\rho(\bar x_4)=x_4$, and $\rho(x)=x$ for all other points. Theorem 4.11 and Section 5 of \cite{hmps} show that $\rho$ is homotopic to $\id_X$, but not through any pointed homotopy. (The homotopy of $\id_X$ to $\rho$ is achieved in two stages by rotating all points by one position, and then rotating them back to their original position but collapsing the point originally at $\bar x_4$ into $x_4$. This homotopy cannot be modified to preserve the base point.)

\begin{exa}\label{unpointedex}
Let $X$ be the digital image of Figure \ref{5twistfig}. We can make $(X,x_0)$ into an $\NP_1$-digital H-space with multiplication $\mu$ given by:
%\[
%\begin{split} 
%\mu(x_i,x_j) &= x_{i+j} \\
%\mu(x_i,\bar x_4) &= \mu(\bar x_4, x_i) = x_{i+4} \\
%\mu(\bar x_4,\bar x_4) &= x_3.
%\end{split}
%\]
\begin{itemize}
\item $\mu(x_i,x_j) = x_{i+j}$;
\item $\mu(x_i,\bar x_4) = \mu(\bar x_4, x_i) = x_{i+4}$; and
\item $\mu(\bar x_4,\bar x_4) = x_3$,
\end{itemize}
where all subscripts are read modulo 5. This $\mu$ is $\NP_1$ continuous.

To check the condition required for $(X,x_0,\mu)$ to be an (unpointed) H-space, we have:
\[ \mu \circ (\id_X, c_{x_0}) (x) = \mu(x,x_0) = \rho(x) \]
and so $$\mu \circ (\id_X, c_{x_0}) = \rho \simeq_1 \id_X,$$ and similarly $$\mu \circ (c_{x_0} , \id_X) =\rho \simeq_1 \id_X,$$ and so $(X,x_0,\mu)$ is a digital H-space.

But by the work in \cite{hmps}, these homotopies $\rho \simeq_1 \id_X$ cannot be made pointed, and so $(X,x_0,\mu)$ is not a pointed digital H-space.
\end{exa}

The same space, with a slightly different operation, gives a pointed H-space which is not pointed H-equivalent to a digital topological group.
\begin{exa}\label{nondtg}
Let $X$ be the digital image from Figure \ref{5twistfig} with basepoint $x_0$, but with multiplication $\tau:X\times X \to X$ given as follows:
%\[
%\begin{split}
%\tau(x_i,x_j) &= x_{i+j} \\
%\tau(x_0,\bar x_4) &= \tau(\bar x_4,x_0) = \bar x_4 \\
%\tau(x_i,\bar x_4) &= \tau(\bar x_4,x_i) = x_{i+4} \qquad \text{if $i\neq 0$} \\
%\tau(\bar x_4, \bar x_4) &= x_3.
%\end{split}
%\]
\begin{itemize}
\item $\tau(x_i,x_j) = x_{i+j}$;
\item $\tau(x_0,\bar x_4) = \tau(\bar x_4,x_0) = \bar x_4$; 
\item $\tau(x_i,\bar x_4) = \tau(\bar x_4,x_i) = x_{i+4}$ for $i\neq 0$; and
\item $\tau(\bar x_4, \bar x_4) = x_3$.
\end{itemize}
This $\tau$ is the same as $\mu$ of Example \ref{unpointedex} except in how it handles $x_0$, which is a unit for $\tau$ but only a homotopy-unit for $\mu$. The operation $$\tau:X\times X \to X$$ is $\NP_1$-continuous, and since $x_0$ is a unit for $\tau$, we have $$\tau \circ (\id_X,c_{x_0}) = \id_X = \tau \circ (c_{x_0},\id_X)$$ and so $(X,x_0,\tau)$ is a pointed $\NP_1$-digital H-space.

We will show that $(X,x_0,\tau)$ is not pointed H-equivalent to any digital topological group. 
We say an $\NP_i$-digital H-space $(Y,e,\mu)$ is \emph{pointed homotopy associative} when $$\mu\circ (\id\times \mu) \simeq_i \mu \circ (\mu \times \id)$$ by some pointed $\NP_i$-homotopy $$H:(Y\times Y \times Y, (e,e,e)) \times [0,k] \to (Y,e).$$
Note that the proof of Theorem \ref{htpeqassociative} still holds in the pointed category; that is, if two pointed H-spaces are pointed H-equivalent and one is pointed homotopy associative, then the other is also pointed homotopy associative.

Since every digital topological group is associative, it is automatically pointed homotopy-associative. Thus to show that our $(X,x_0,\tau)$ is not pointed H-equivalent to an $\NP_1$-digital topological group, it will suffice to show that $(X,x_0,\tau)$ is not pointed homotopy-associative.

Let $d_1,  d_4:(X,x_0)\to (X,x_0)$ be the maps given as follows:
\[ d_1(a) = \begin{cases} x_0 &\text{ if } a=x_0, \\ x_1 &\text{ otherwise,}\end{cases} \qquad  d_4(a) = \begin{cases} x_0 &\text{ if } a=x_0, \\  x_4 &\text{ otherwise.}\end{cases} \]
For $a\in X$, we have:
\[ 
\begin{split}
\tau \circ (\tau \times \id_X) \circ (\id_X, d_1,  d_4) (a) &= \begin{cases} \tau(\tau(x_0, x_0), x_0) &\text{ if } a = x_0, \\ \tau(\tau(a,x_1),  x_4) &\text{ otherwise} \end{cases} \\
&= \begin{cases} x_0 &\text{ if } a = x_0, \\
\tau(x_{i+1},  x_4) &\text{ if } a = x_i \text{ for } i\neq 0, \\
\tau(x_0,  x_4) &\text{ if } a =  \bar x_4
\end{cases} \\
&= \begin{cases} x_0 &\text{ if } a = x_0, \\
x_i &\text{ if } a = x_i \text{ for } i\neq 0, \\
 x_4 &\text{ if } a = \bar x_4
\end{cases}
\\
&= \rho(a),
\end{split}
\]
and similarly
\[ 
\begin{split}
\tau \circ(\id_X \times \tau) \circ (\id_X, d_1, d_4)(a) &= 
\begin{cases} \tau(x_0, \tau(x_0, x_0)) &\text{ if } a = x_0, \\ \tau(a,\tau(x_1, x_4)) &\text{ otherwise} \end{cases} \\
&= \begin{cases} x_0 &\text{ if } a=x_0, \\
\tau(a, x_0) &\text{ otherwise} \end{cases} \\
&= a.
\end{split}
\]

The above formulas show that $$\tau \circ(\tau \times \id_X) \circ (\id_X, d_1, d_4) =\rho,$$ while $$\tau \circ (\id_X \times \tau) \circ (\id_X, d_1, d_4) = \id_X.$$ 

Since $\rho$ is not pointed homotopic to $\id_X$, the above means that $\tau\circ (\tau\times \id_X)$ is not pointed homotopic to $\tau \circ (\id_X\times \tau)$. Thus $(X,x_0,\tau)$ is not pointed homotopy associative, and so it is not pointed H-equivalent to any digital topological group.
\end{exa}

\bigskip

\section{Examples, and classification of $\NP_2$-digital H-spaces} \label{examples}
Obviously any $\NP_i$-digital topological group is an example of an $\NP_i$-digital H-space. By Theorem \ref{inducedmult}, any digital image homotopy equivalent to a digital topological group is an H-space. So it is natural to ask if there are any $\NP_i$-digital H-spaces which are not H-equivalent to an $\NP_i$-digital H-space. We give a class of examples in this section.

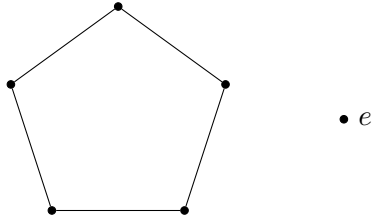
\begin{figure}[h!]
\[ 
\begin{tikzpicture}[scale=1.5]
	\node[vertex] (y0)   at (0,1) {};
	\node[vertex] (y1)   at (162:1) {}; 
	\node[vertex] (y2) at (234:1) {};
	\node[vertex] (y3) at (306:1) {};
	\node[vertex] (y4) at (18:1) {};
		
	\draw (y0) -- (y1) -- (y2) -- (y3) -- (y4) -- (y0);
	
	\node[vertex] at (2,0) [label=right:$e$] {};
\end{tikzpicture}		
\]
\caption{A digital H-space which is not homotopy equivalent to a digital topological group\label{disjointfig}.}
\end{figure}

An example is shown in Figure \ref{disjointfig}, which is the simplest possible case of the following construction. We say a pair $(Z,\tau)$ is an \emph{$\NP_i$-digital magma}  when $Z$ is a digital image, and $\tau:Z\times Z\to Z$ is $\NP_i$-continuous. (In a magma, we make no assumptions about existence of a homotopy-unit.) Such magmas are very easy to construct, even in the $\NP_2$ category. For example if $Z$ is any digital image then defining $\tau:Z\times Z \to Z$ as a constant gives an $\NP_2$-digital magma.

\begin{exa}\label{magmapoint}
Let $(Z,\tau)$ be any $\NP_i$-digital magma, and declare some new point $e\not \in Z$. Then we will show the disjoint union $X = Z\sqcup \{e\}$ is an $\NP_i$-digital H-space with unit element $e$. (Figure \ref{disjointfig} is such a union, where $Z$ is the 5-cycle graph, which is the digital topological group $\Z_5$.) 

Our H-space multiplication $\mu:X\times X \to X$ is defined as follows:
\[ 
\mu(a,b) = \begin{cases} \tau(a,b) \qquad &\text{ if }a,b\in Z, \\
a &\text{ if } b=e, \\
b &\text{ if } a=e
\end{cases}
\]

To show that $\mu$ is $\NP_i$-continuous, take $(a,b)\sim_i (c,d)$ in $X\times X$, and we consider various cases: If both of $a$ and $b$ are in $Z$, then both of $c,d$ are also in $Z$, so $$\tau(a,b) = \mu(a,b) \sim \mu(c,d) = \tau(c,d)$$ because $\mu$ is $\NP_i$-continuous. If $a \in Z$ and $b=e$, then also we must have $c\in Z$ and $d=e$, and we have $\tau(a,b) = a$ and $\tau(c,d) = c$ and so $\tau(a,b) \sim \tau(c,d)$ since $(a,b)\sim_i (c,d)$. Similarly if $a=e$ and $b\in Z$ we have $\tau(a,b) \sim \tau(c,d)$. 

The element $e$ is a unit, so $(X,e,\mu)$ is a digital H-space.

But $X$ is not, in general, H-equivalent to a digital topological group. Corollary 4.9 of \cite{dtg} states that the components of a digital topological group must be isomorphic to one another as graphs, but we have made no requirement that the components of $X$ be homotopy equivalent to one another. In particular, if $Z$ has any noncontractible component, then this component of $X$ is not homotopy equivalent to $\{e\}$. In this case, no digital image homotopy equivalent to $X$ can have all components isomorphic, and thus $X$ is not H-equivalent to any $\NP_i$-digital topological group.
\end{exa}

The following classification theorem shows that any irreducible $\NP_2$-digital H-space is generally of the type from Example \ref{magmapoint}: it consists of a singleton unit component $\{e\}$, together with an $\NP_2$-digital magma, and the H-space operation agrees with the magma operation except possibly mapping certain components of $X\times X$ to the identity element. 
\begin{thm}\label{np2classification}
Let $(X,e,\mu)$ be an irreducible $\NP_2$-digital H-space. Then $X$ is a disjoint union $X = \{e\} \sqcup Z$, and there exists a magma operation $\tau: Z \times Z \to Z$, and a subset $A\subseteq Z \times Z$ such that $A$ is a union of components and
\begin{equation}\label{magmanp2eq}
\mu(x,y) = \begin{cases}
\tau(x,y) &\text{ if } (x,y)\in A \\
x &\text{ if } y=e, \\
y &\text{ if } x=e, \\
e &\text{ otherwise.}
\end{cases} 
\end{equation}
\end{thm}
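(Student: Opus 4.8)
The plan is to first use $\NP_2$-irreducibility to rigidify the entire structure, and then analyze $\mu$ one component at a time. Since $X$ is $\NP_2$-irreducible, Lemma \ref{np2rigid} shows that $X$ is $\NP_2$-rigid, and Theorem \ref{np2contractible} shows that the component $X_e$ is $\NP_2$-contractible. Rigidity descends to the component $X_e$: any homotopy moving $\id_{X_e}$ extends to a homotopy of $X$ by fixing every other component, which would contradict rigidity of $X$. Thus $X_e$ is simultaneously contractible and rigid, and a contractible rigid digital image must be a single point (contractibility gives $\id \simeq_2 c_e$, and rigidity then forces $\id = c_e$). Hence $X_e = \{e\}$, so $e$ is an isolated point and, writing $Z = X \setminus \{e\}$, we obtain the disjoint decomposition $X = \{e\} \sqcup Z$.

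Next I would promote the homotopy-unit $e$ to a strict two-sided unit. The maps $\mu_e$ and $\nu_e$ satisfy $\mu_e \simeq_2 \id_X \simeq_2 \nu_e$ by Theorem \ref{homotopictoid}. Since $(\id_X, c_e)$ and $(c_e, \id_X)$ are continuous (one coordinate is constant), the compositions $\nu_e = \mu \circ (\id_X, c_e)$ and $\mu_e = \mu \circ (c_e, \id_X)$ are honest self-maps of $X$ homotopic to $\id_X$, so rigidity forces $\mu_e = \id_X = \nu_e$. This gives $\mu(e,y) = y$ and $\mu(x,e) = x$ for all $x,y \in X$, which are exactly the two middle cases of \eqref{magmanp2eq}.

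The heart of the argument is the behaviour of $\mu$ on $Z \times Z$. Because $e$ is isolated, $Z \times Z$ is a union of components of $(X \times X, \NP_2)$: any $\NP_2$-neighbor of a point of $Z\times Z$ has both coordinates $\NP_2$-adjacent to, hence in the same component as, points of $Z$, so it again lies in $Z \times Z$. Since $\mu$ is continuous it carries a path to a path, so it sends each connected component of $Z \times Z$ into a single component of $X$; as the components of $X$ are $\{e\}$ together with the components of $Z$, every component of $Z \times Z$ is mapped either entirely to $e$ or entirely into $Z$. Setting $A = \{(x,y) \in Z \times Z : \mu(x,y) \neq e\}$ therefore realizes $A$ as the union of those components of $Z \times Z$ sent into $Z$, so $A$ is a union of components, and by construction $\mu(x,y) = e$ on $(Z \times Z) \setminus A$, giving the last case of the formula.

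Finally I would build the magma operation $\tau$. Define $\tau = \mu$ on $A$ (which lands in $Z$ by definition of $A$) and, on each component of $(Z \times Z) \setminus A$, let $\tau$ be the constant map at some fixed $z_0 \in Z$ (the statement is vacuous if $Z = \emptyset$). Since $A$ and its complement are each unions of components, no adjacency crosses between them, so this piecewise definition is $\NP_2$-continuous; thus $(Z, \tau)$ is an $\NP_2$-digital magma and $\mu$ agrees with $\tau$ precisely on $A$, establishing \eqref{magmanp2eq}. I expect the main obstacle to be the first step, collapsing $X_e$ to $\{e\}$ and upgrading the homotopy-unit to a strict unit: this is where $\NP_2$-rigidity (unavailable in the $\NP_1$ category) does all the real work, whereas the decomposition into $A$ and its complement is then a routine consequence of continuity together with the fact that both sets are unions of components.
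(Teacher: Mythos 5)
Your proposal is correct and follows essentially the same route as the paper's proof: collapse $X_e$ to $\{e\}$ via Theorem \ref{np2contractible} and irreducibility, use rigidity (Lemma \ref{np2rigid}) to upgrade $\mu_e \simeq_2 \id_X \simeq_2 \nu_e$ to equalities, define $A$ as the preimage of $Z$ inside $Z\times Z$, show it is a union of components by continuity and the fact that $\{e\}$ is its own component, and define $\tau$ piecewise with an arbitrary constant off $A$. Your explicit justification that a rigid contractible component must be a single point fills in a step the paper leaves implicit, but the argument is the same.
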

\begin{proof}
Since $X$ is irreducible, by Theorem \ref{np2contractible} the component of $e$ is $\{e\}$. By Theorem \ref{homotopictoid}, $\mu_e \simeq_2 \id_X$, but $X$ is rigid by Lemma \ref{np2rigid}, and so we must have $\mu_e = \id_X$. Similarly $\nu_e = \id_X$, and so $e$ is a unit, which means $\mu(x,e) = x$ and $\mu(e,y) = y$ which is part of what we must show.

Let $A \subset Z \times Z$ be defined by $A = \mu^{-1}(Z)$. To show $A$ is a union of components, it suffices to take $(a,b) \in A$ and $(c,d)\in Z$ with $(a,b)\sim_2 (c,d)$, and show that $(c,d)\in A$. Since $(a,b)\in A$ we have $\mu(a,b) \in Z$ so $\mu(a,b)\neq e$. By continuity of $\mu$, we have $\mu(c,d) \sim \mu(a,b) \neq e$, and since $\{e\}$ is a component this means that $\mu(c,d) \neq e$ and so $(c,d)\in A$ as desired.

Choose some $z\in Z$, and we define $\tau: Z \times Z \to Z$ by:
\[ 
\tau(x,y) = \begin{cases} \mu(x,y) &\text{ if } (x,y)\in A, \\
z &\text{ otherwise. }
\end{cases}
\]
Since $A$ is a union of components, the piecewise definition above makes $\tau:Z \times Z \to Z$ continuous.

Since  $A = \mu^{-1}(Z)$, we have $\mu(Z - A) = e$. Combining the above gives \eqref{magmanp2eq} as desired.
\end{proof}

The construction in Example \ref{magmapoint} will produce many examples of disconnected $\NP_i$-digital H-spaces which are not H-equivalent to $\NP_i$-digital topological groups. It is natural to ask if connected examples exist. In the $\NP_2$ category, they cannot: by Corollary \ref{np2contractible}, any connected $\NP_2$-digital H-space must be contractible, so it is H-equivalent to a single point, which is an $\NP_2$-digital topological group. In the $\NP_1$ category, we do not know if such examples can exist:

\begin{quest}\label{examplequestion}
Is there any connected $\NP_1$-digital H-space which is not H-equivalent to an $\NP_1$-digital topological group?
\end{quest}

\bigskip

\appendix

\section{Classification of $\NP_i$-digital topological groups} \label{dtg}

In this appendix, we resolve a question that was left unresolved in \cite{dtg}, and complete the full classification of $\NP_i$-digital topological groups. Section 6 of \cite{dtg} already classified the $\NP_2$ case, so we will focus on $\NP_1$.

Given a finitely generated group $G$ with some chosen finite subset $S\subseteq G$, the \emph{Cayley graph}  \cite[page 28]{BM} of $G$ with respect to $S$, which we denote $\Gamma(G,S)$, is the graph with vertex set $G$ and adjacency defined by $a\sim b$ if and only if $ab^{-1} \in S$ or $ba^{-1} \in S$.

The paper \cite{dtg} assumes that $S$ is a generating set for $G$, which is a common convention. Theorem 4.3 of \cite{dtg} shows that any Cayley graph is an $\NP_1$-digital topological group, but did not address the converse adequately because of an incorrect implicit assumption. 

Specifically, Example 4.6 of that paper presents an $\NP_1$-digital topological group and claims that it is not a Cayley graph, but this is not correct. Example 4.6 of \cite{dtg} is a complete graph of 4 points, with group operation making it isomorphic as a group to $\Z_4$. It was claimed in \cite{dtg} that this is not a Cayley graph, because the Cayley graph of $\Z_4$ is a simple cycle of 4 points, not a complete graph. But any given group $G$ may have several different Cayley graphs, depending on the choice of the set $S$. Since we do not require $S$ be a minimal generating set, in this example we may consider $G = \{0,1,2,3\}$ with generating set $S=\{1,2\}$, and then $\Gamma(G,S)$ is indeed a complete graph of 4 points.

In fact, if we do not require $S$ to be a generating set, then all $\NP_1$-digital topological groups are Cayley graphs:
\begin{thm}\label{cayleygraph}
Let $G$ be an $\NP_1$-digital topological group, and let $S\subset G$ be the set of all points adjacent to the group identity element. Then $G$ is the Cayley graph $\Gamma(G,S)$. (Note that $S$ may not be a generating set.)
\end{thm}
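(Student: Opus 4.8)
The plan is to exploit the homogeneity of a digital topological group: right translations are graph automorphisms, and this lets me transport any adjacency in $G$ to an adjacency with the group identity, which is exactly where the set $S$ is defined. Throughout, let $e$ denote the group identity and $\kappa$ the adjacency relation of $G$.

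First I would show that for each $g\in G$ the right translation $R_g:G\to G$, $R_g(x)=xg$, is an isomorphism of digital images. The key point is that $R_g = m\circ(\id_G, c_g)$, where $m:G\times G\to G$ is the group multiplication (which is $\NP_1$-continuous by the definition of an $\NP_1$-digital topological group) and $c_g$ is the constant map at $g$. As emphasized in Section \ref{prem}, the pair $(\id_G, c_g)$ is $(\kappa,\NP_1)$-continuous precisely because one of its coordinates is constant; hence $R_g$ is $(\kappa,\kappa)$-continuous as a composite of continuous maps. Its set-theoretic inverse is $R_{g^{-1}}$, which is continuous for the same reason, so $R_g$ is a digital isomorphism. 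In particular, $a\sim b$ if and only if $R_g(a)\sim R_g(b)$ for all $a,b\in G$.

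Next I would read off the adjacency relation of $G$. Applying the isomorphism $R_{b^{-1}}$, for any $a,b\in G$ we have $a\sim b$ if and only if $ab^{-1}\sim bb^{-1}=e$, which by the definition of $S$ holds if and only if $ab^{-1}\in S$. Interchanging $a$ and $b$ and using the symmetry of $\sim$ shows that this is also equivalent to $ba^{-1}\in S$ (in passing, taking $a=s$, $b=e$ shows that $S$ is closed under inversion). Therefore $a\sim b$ if and only if $ab^{-1}\in S$ or $ba^{-1}\in S$, which is exactly the adjacency relation of the Cayley graph $\Gamma(G,S)$. Since $G$ and $\Gamma(G,S)$ share the same vertex set and the same adjacency relation, they are equal as digital images.

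I expect the only delicate point to be the $\NP_1$-continuity of $R_g$: in the $\NP_1$ category a general pair of maps need not be continuous (cf.\ Remark \ref{htpinv}), so it is essential that $R_g$ be expressed through a pair in which one coordinate is constant. Everything else is a direct translation argument, and no case analysis on the internal structure of $S$ is needed.
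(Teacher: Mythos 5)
Your proof is correct and follows essentially the same route as the paper: both arguments use the fact that right translation by $b^{-1}$ is a digital isomorphism to reduce the adjacency $a\sim b$ to $ab^{-1}\sim e$, i.e.\ $ab^{-1}\in S$. The only difference is that the paper cites Theorem 3.4 of \cite{dtg} for the isomorphism claim while you prove it inline via the $\NP_1$-continuity of $(\id_G,c_g)$, and you additionally note that symmetry of the adjacency makes the two halves of the Cayley-graph condition coincide --- a small point the paper leaves implicit.
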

\begin{proof}
To show that $G$ is the stated Cayley graph, we will show that $a\sim b$ if and only if $ab^{-1}\in S$. Let $e\in G$ be the unit element.

Theorem 3.4 of \cite{dtg} shows that the right-multiplication $\nu_x$ is an isomorphism for any $x$. Thus we will have $a\sim b$ if and only if $$\nu_{b^{-1}}(a) \sim \nu_{b^{-1}}(b),$$ but this is equivalent to $ab^{-1} \sim e$, which means $ab^{-1}\in S$. 
\end{proof}

If the set $S$ is assumed to generate $G$, then the Cayley graph $\Gamma(G,S)$ will be connected. And we can show that any connected digital topological group arises in exactly this way:
\begin{thm}\label{connectedcayleygraph}
Let $G$ be a connected $\NP_1$-digital topological group, and as above let $S$ be the set of points adjacent to the identity element. Then $S$ is a generating set, and $G$ is the Cayley graph $\Gamma(G,S)$.
\end{thm}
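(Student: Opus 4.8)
The plan is to reduce everything to Theorem \ref{cayleygraph}, which already establishes that \emph{any} $\NP_1$-digital topological group $G$ equals its Cayley graph $\Gamma(G,S)$, via the adjacency characterization $a\sim b \iff ab^{-1}\in S$. Since that identification holds with no connectedness hypothesis, the only genuinely new content in the connected case is the claim that $S$ is a generating set; so the bulk of the work is to prove generation, after which the final sentence of the theorem follows immediately from Theorem \ref{cayleygraph}.

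Before proving generation I would record the elementary structure of $S$. Because the adjacency relation is symmetric, $a\sim b \iff b\sim a$, and combining this with $a\sim b\iff ab^{-1}\in S$ shows that $S$ is closed under inversion (that is, $s\in S$ iff $s^{-1}\in S$), with $e\in S$ by reflexivity. This symmetry is what lets me traverse a path in either direction and still read off elements of $S$.

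To prove $S$ generates $G$, I would let $g\in G$ be arbitrary and use connectedness to choose a path $e = x_0 \sim x_1 \sim \dots \sim x_m = g$. For each $i$ the adjacency $x_i\sim x_{i+1}$ together with the characterization from Theorem \ref{cayleygraph} gives $s_i := x_{i+1}x_i^{-1}\in S$, and a telescoping product then yields $g = x_m = s_{m-1}s_{m-2}\cdots s_0$, exhibiting $g$ as a word in the elements of $S$. Since $g$ was arbitrary, $\langle S\rangle = G$.

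The step I expect to require the most care, rather than the most ingenuity, is keeping the direction of the asymmetric condition $a\sim b\iff ab^{-1}\in S$ consistent across the telescoping product; this is exactly what the symmetry of $S$ recorded above resolves. Once generation is established, the identity $G=\Gamma(G,S)$ is inherited verbatim from Theorem \ref{cayleygraph}, completing the proof.
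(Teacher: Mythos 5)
Your proposal is correct, and the Cayley-graph identification is handled exactly as in the paper (both proofs simply invoke Theorem \ref{cayleygraph}, which needs no connectedness). Where you genuinely diverge is in proving that $S$ generates $G$. The paper argues by contradiction: assuming $\langle S\rangle \neq G$, connectedness produces adjacent points $a\sim b$ with $a\in\langle S\rangle$ and $b\notin\langle S\rangle$; it then takes the order $n$ of $a$, applies the right-multiplication isomorphism $\nu_{a^{n-1}}$ to get $ba^{n-1}\in S$, and concludes $b = (ba^{n-1})a^{n-1\,\cdot(-1)}\cdots$ lands in $\langle S\rangle$, a contradiction. Your argument is direct and constructive: walk a path $e=x_0\sim\cdots\sim x_m=g$, read off $s_i=x_{i+1}x_i^{-1}\in S$ from the adjacency characterization, and telescope to get $g=s_{m-1}\cdots s_0$. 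Your version is arguably cleaner --- it exhibits an explicit word in $S$ for each element, avoids the appeal to the finite order of a group element (which silently uses finiteness of $G$), and your preliminary observation that $S$ is closed under inversion correctly disposes of the only directional subtlety in the telescoping product. The paper's version has the minor virtue of only needing to examine a single edge crossing the boundary of $\langle S\rangle$ rather than a full path, but both are short; there is no gap in your argument.
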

\begin{proof}
The proof of Theorem \ref{cayleygraph} applies again to show that $G$ is the Cayley graph $\Gamma(G,S)$. It remains only to show that $S$ generates G. 

To obtain a contradiction, assume that $S$ does not generate $G$. Since $G$ is connected and $\langle S \rangle$, the set generated by $S$, is nonempty, this means that there is a pair of adjacent points $a\sim b$ with $a\in \langle S\rangle$ and $b\not\in \langle S\rangle$. 

Let $n$ be the order of $a$, so that $a^n=e$. Then since $a\sim b$, we have $$e = \nu_{a^{n-1}}(a) \sim \nu_{a^{n-1}}(b),$$ and thus $$ba^{n-1}\sim e,$$ which is to say that $ba^{n-1} \in S$. Since $a\in S$ this means $ba^{n-1} a \in \langle S\rangle$ and so $b\in \langle S\rangle$ which is a contradiction.
\end{proof}

The results above are a full classification of $\NP_1$-digital topological groups. We summarize these results together with the work in Section 6 of \cite{dtg}, which classified $\NP_2$-digital topological groups.
\begin{thm}
The class of $\NP_1$-digital topological groups is exactly the class of Cayley graphs $\Gamma(G,S)$. (Where $S$ may not generate $G$.) An $\NP_1$-digital topological group is connected if and only if $S$ generates $G$. 

The class of $\NP_2$-digital topological groups is exactly the class of finite cluster graphs (disjoint unions of complete graphs) having isomorphic clusters. An $\NP_2$-digital topological group is connected if and only if it is a complete graph.
\end{thm}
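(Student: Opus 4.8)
The plan is to assemble this summary from the pieces already established in the appendix and in \cite{dtg}, verifying each biconditional in both directions. Since the statement has two essentially independent halves — one for $\NP_1$ and one for $\NP_2$ — I would treat them separately, and in each half handle first the equality of classes and then the connectedness characterization.

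For the $\NP_1$ case, the equality of classes requires two inclusions. The inclusion that every $\NP_1$-digital topological group is a Cayley graph is precisely Theorem \ref{cayleygraph}: given such a group $G$, one takes $S$ to be the set of points adjacent to the identity and shows $G = \Gamma(G,S)$. The reverse inclusion, that every Cayley graph $\Gamma(G,S)$ is an $\NP_1$-digital topological group, is Theorem 4.3 of \cite{dtg}. For the connectedness biconditional, one direction is the standard graph-theoretic fact that $\Gamma(G,S)$ is connected whenever $S$ generates $G$, since a path from $e$ to $g$ corresponds to a factorization of $g$ using generators from $S$; the converse, that connectedness forces $S$ to generate, is exactly Theorem \ref{connectedcayleygraph}.

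For the $\NP_2$ case, both the class equality and the connectedness characterization are established in Section 6 of \cite{dtg}, which shows that the $\NP_2$-digital topological groups are precisely the finite cluster graphs with isomorphic clusters. The connectedness statement then follows immediately from the elementary observation that a disjoint union of complete graphs is connected if and only if there is a single cluster, i.e. the graph is itself complete.

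The only genuine care required — rather than a deep obstacle — is to confirm that the set $S$ appearing in the $\NP_1$ statement is unambiguously determined by the group structure, namely as the adjacency-neighborhood of the identity, so that the correspondence between $\NP_1$-digital topological groups and Cayley graphs is phrased consistently across the class equality and the connectedness biconditional. With that fixed, the theorem is a direct consolidation of Theorems \ref{cayleygraph} and \ref{connectedcayleygraph} together with the cited results of \cite{dtg}, and no further computation is needed.
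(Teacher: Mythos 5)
Your proposal is correct and matches the paper's treatment: the paper presents this theorem as a direct summary, with no new argument, of Theorem \ref{cayleygraph}, Theorem \ref{connectedcayleygraph}, Theorem 4.3 of \cite{dtg}, and the $\NP_2$ classification in Section 6 of \cite{dtg}, exactly as you assemble it. Your added remarks on the standard connectedness of $\Gamma(G,S)$ for generating $S$ and on fixing $S$ as the neighborhood of the identity are consistent with what the appendix already establishes.
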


\bigskip

\bibliography{hspaces}
\bibliographystyle{plain}

\end{document}